\documentclass[12pt]{article}
\usepackage{amsfonts,amssymb,amsmath,bm,amsthm}



\parskip=1ex
\sloppy
\newtheorem{theorem}{Theorem}

\newtheorem{corollary}[theorem]{Corollary}
\newtheorem{lemma}[theorem]{Lemma}
\newtheorem{proposition}{Proposition}

\newtheorem{conjecture}[theorem]{Conjecture}
\newtheorem{theoreml}{Theorem L}
\newtheorem{theoremBZ}{Theorem BZ}

\newtheorem{theorembhwy}{Theorem BHWY}

\theoremstyle{definition}
\newtheorem{remark}[theorem]{Remark}

\newtheorem{notation}[theorem]{Notation}



\def\R{\mathbb{R}}

\def\Q{\mathbb{Q}}
\def\Z{\mathbb{Z}}
\def\N{\mathbb{N}}



\newcommand{\ve}{\mathbf{e}}

\newcommand{\vv}[1]{{\mathbf{#1}}}

\renewcommand{\tilde}{\widetilde}

\newcommand{\bigk}{\mathop{\mathbf{K}}}

\newcommand{\hidden}[1]{}

\newcommand{\ftm}{f_{TM}}

\newcommand{\cTM}{\tau_{TM}}

\newcommand{\va}{\mathbf{a}}
\newcommand{\vb}{\mathbf{b}}
\newcommand{\vc}{\mathbf{c}}
\newcommand{\vu}{\mathbf{u}}

\newcommand{\logc}{\log_2}

\title{On irrationality measure of Thue-Morse constant}

\author{Dzmitry Badziahin\footnote{Univerity of Sydney, School of Mathematics and Statistics, NSW 2006} \;and Evgeniy Zorin\footnote{University of York, Department of Mathematics, York, YO10 5DD, United Kingdom.}\thanks{
Evgeniy Zorin acknowledges the support of EPSRC  Grant EP/M021858/1}
}

\begin{document}

\maketitle

\begin{abstract}
We provide a non-trivial measure of irrationality for a class of Mahler
numbers defined with infinite products which cover the Thue-Morse constant.
Among the other things, our results imply a generalization
to~\cite{bugeaud_2011}.
\end{abstract}

\section{Introduction}

Let $\xi\in\R$ be an irrational number. Its irrationality exponent $\mu(\xi)$
is defined to be the supremum of all $\mu$ such that the inequality
$$
\left|\xi - \frac{p}{q}\right|<q^{-\mu}
$$
has infinitely many rational solutions $p/q$. This is an important property
of a real number since it shows, how close the given real number can be
approximated by rational numbers in terms of their denominators. The
irrationality exponent can be further refined by the following notion.
Let $\psi(q):\R_{\ge 0} \to \R_{\ge 0}$ be a function which tends to zero as
$q\to \infty$. Any function $\psi$ with these properties is referred to as the
\emph{approximation function}. We say that an irrational number $\xi$ is
\emph{$\psi$-well approximable} if the inequality
\begin{equation}\label{def_well}
\left|\xi -
\frac{p}{q}\right|<\psi(q)
\end{equation}
has infinitely many solutions $p/q\in\Q$. Conversely, we say that $\xi$ is
\emph{$\psi$-badly approximable} if~\eqref{def_well} has only finitely many
solutions. Finally, we say that $\xi$ is \emph{badly approximable} if it is
$c/q$-badly approximable for some positive costant $c>0$.

If a number $\xi\in\R$ is $\psi$-badly approximable, we also say that $\psi$
is a \emph{measure of irrationality of $\xi$}.

The statement $\mu(\xi) = \mu$ is equivalent to saying that for any
$\epsilon>0$, $\xi$ is both $q^{-\mu-\epsilon}$-well approximable and
$q^{-\mu+\epsilon}$-badly approximable. On the other hand, $(q^2\log
q)^{-1}$-badly approximable numbers are in general worse approached by
rationals when compared to $(q^2\log^2 q)^{-1}$-badly approximable numbers,
even though that both of them have irrationality exponent equal to 2.

\begin{remark} \label{im_same_c}
It is quite easy to verify
that, for any approximation function $\psi$, for any $\xi\in\R$ and any
$c\in\Q\setminus\{0\}$, the numbers $\xi$ and $c\xi$ simultaneously are or
are not $\psi$-badly approximable. Similarly, they simultaneously are or are
not $\psi$-well approximable.
\end{remark}

A big progress has been made recently in determining Diophantine
approximation properties of so called Mahler numbers. Their definition
slightly varies in the literature. In the present paper we define Mahler
functions and Mahler numbers as follows. An analytic function $F(z)$ is
called \emph{Mahler function} if it satisfies the functional equation
\begin{equation}\label{def_mahlf}
\sum_{i=0}^n P_i(z)F(z^{d^i}) = Q(z)
\end{equation}
where $n$ and $d$ are positive integers with $d\ge 2$, $P_i(z), Q(z) \in
\Q[z]$, $i=0,\dots,n$ and $P_0(z)P_n(z) \neq 0$. We will only consider those
Mahler functions $F(z)$ which lie in the space $\Q((z^{-1}))$ of Laurent
series. Then, for any $\alpha\in\overline{\Q}$ inside the disc of convergence
of $F(z)$, a real number $F(\alpha)$ is called a \emph{Mahler number}.

One of the classical examples of Mahler numbers is the so called Thue-Morse
constant which is defined as follows. Let $\vv
t=(t_0,t_1,\dots)=(0,1,1,0,1,0,0,\dots)$ be the Thue-Morse sequence, that is
the sequence $(t_n)_{n\in\N_0}$, where $\N_0:=\N\cup\{0\}$, defined by the
recurrence relations $t_0=0$ and for all $n\in\N_0$
$$
\begin{aligned}
t_{2n}&=t_n,\\
t_{2n+1}&=1-t_n.
\end{aligned}
$$
Then, the Thue-Morse constant $\cTM$ is a real number which binary expansion
is the Thue-Morse word. In other words,
\begin{equation} \label{defTM}
\cTM:=\sum_{k=0}^{\infty}\frac{t_k}{2^{k+1}}.
\end{equation}

It is well known that $\cTM$ is a Mahler number. Indeed, one can check that
$\cTM$ is related with the generating function
\begin{equation} \label{ftm_presentation}
\ftm(z):=\sum_{i=0}^{\infty}(-1)^{t_i}z^{-i}
\end{equation}
by the formula $\cTM = \frac12 (1- \frac12 \ftm(2))$. At the same time, the function $\ftm(z)$, defined
by~\eqref{ftm_presentation}, admits the following presentation~\cite[\S13.4]{AS2003}:
$$
\ftm(z)=\prod_{k=0}^{\infty}\left(1-z^{-2^k}\right),
$$
and the following functional equation holds:
\begin{equation} \label{func_eq}
\ftm(z^2)=\frac{z}{z-1}\ftm(z).
\end{equation}
So it is indeed a Mahler function.

Approximation of Mahler numbers by algebraic numbers has been studied within
a broad research direction on transcendence and algebraic independence of
these numbers. We refer the reader to the monograph~\cite{Ni1996} for more
details on this topic. 

It has to be mentioned that, though some results on approximation by
algebraic numbers can be specialized to results on rational approximations,
most often they become rather weak. This happens because the results on
approximations by algebraic numbers necessarily involve complicated
constructions, which results in some loss of precision. More fundamental
reason is that rational numbers enjoy significantly more regular (and much
better understood) distribution in the real line when compared to the
algebraic numbers.

The history of the research of approximation properties of Mahler numbers by
rational numbers probably started in the beginning of 1990th with the work of
Shallit and van der Poorten~\cite{vdP_S}, where they considered a class of
numbers that contains some Mahler numbers, including Fredholm constant
$\sum_{n=0}^{\infty}10^{-2^n}$, and they proved that all numbers from that
class are badly approximable.

The next result on the subject, the authors are aware of, is due to
Adamczewski and Cassaigne. In 2006, they
proved~\cite{adamczewski_cassaigne_2006} that every automatic number (which,
according to~\cite[Theorem 1]{becker_1994}, is a subset of Mahler numbers)
has finite irrationality exponent, or, equivalently, every automatic number
is not a Liouville number. Later, this result was extended to all
Mahler numbers~\cite{bell_bugeaud_coons_2015}. 
We also mention here the result by Adamczewski and
Rivoal~\cite{adamczewski_rivoal_2009}, where they showed that some classes of
Mahler numbers are $\psi$-well approximable, for various functions $\psi$
depending on a class under consideration.

The Thue-Morse constant is one of the first Mahler numbers which irrationality
exponent was computed precisely, 
it has been done by Bugeaud in 2011~\cite{bugeaud_2011}. This result served as a foundation
for several other works, establishing precise values of irrationality
exponents for wider and wider classes of Mahler numbers, see for
example~\cite{coons_2013, guo_wu_wen_2014, wu_wen_2014}.

Bugeaud, Han, Wen and Yao~\cite{bugeaud_han_wen_yao_2015} computed the
estimates of $\mu(f(b))$ for a large class of Mahler functions $f(z)$,
provided that the distribution of indices at which Hankel determinants of
$f(z)$  do not vanish or, equivalently, the continued fraction of $f(z)$ is
known.  In many cases, these estimates lead to the precise value of
$\mu(f(b))$. We will consider this result in more details in the next
subsection. Later, Badziahin~\cite{badziahin_2017} provided a continued
fraction expansion for the functions of the form
$$
f(z) = \prod_{t=0}^\infty P(z^{-d^t})
$$
where $d\in\N, d\ge 2$ and $P(z)\in \Q[z]$ with $\deg P< d$. This result, complimented with~\cite{bugeaud_han_wen_yao_2015}, allows to find sharp estimates for the values of these functions at integer points.

Despite rather extensive studies on irrationality exponents of Mahler
numbers, very little is known about their sharper Diophantine approximation
properties. In 2015, Badziahin and Zorin~\cite{BZ2015} proved that the
Thue-Morse constant $\cTM$, together with many other values of $\ftm(b)$,
$b\in\N$, are not badly approximable. Moreover, they proved
\begin{theoremBZ}
Thue-Morse constant $\cTM$ is $\frac{C}{q^2\log\log q}$-well approximable, for some explicit constant $C>0$.
\end{theoremBZ}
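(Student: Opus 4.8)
The plan is to construct, explicitly and infinitely often, good rational approximations to $\cTM$ using the product structure of $\ftm$. Recall $\cTM = \frac12(1 - \frac12 \ftm(2))$, so by Remark~\ref{im_same_c} (applied with $c$ rational) it suffices to produce infinitely many rationals $p/q$ with $|\ftm(2) - p/q| < C'/(q^2\log\log q)$. Since $\ftm(z) = \prod_{k=0}^\infty(1 - z^{-2^k})$, the natural candidate denominators come from the partial products: set $F_N(z) = \prod_{k=0}^{N-1}(1 - z^{-2^k})$, a Laurent polynomial in $z^{-1}$ of degree $2^N - 1$, so $F_N(2)$ is a rational with denominator dividing $q_N := 2^{2^N-1}$. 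The truncation error is $\ftm(2) - F_N(2) = F_N(2)\big(\prod_{k\ge N}(1 - 2^{-2^k}) - 1\big)$, which is of exact order $2^{-2^N}$, i.e. of order $q_N^{-2}$ up to a bounded factor. This already gives irrationality exponent $2$, but not the extra $\log\log q$; the gain must come from choosing a better rational than the crude truncation.

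The key step is therefore to refine the truncation. Write $\ftm(2) = F_N(2)\cdot\prod_{k\ge N}(1-2^{-2^k})$ and expand the tail product: $\prod_{k\ge N}(1-2^{-2^k}) = 1 - 2^{-2^N} - 2^{-2^{N+1}} + \cdots$. The dominant correction term $-2^{-2^N}$ has denominator $2^{2^N}$, which is essentially $q_N^2$ — so adding it to the approximation would increase the denominator to order $q_N^3$ without a commensurate error gain. The trick, following the strategy behind Theorem~BZ, is to not truncate at a generic $N$ but to exploit a near-coincidence: choose $N$ so that a single extra rational term with a \emph{small} denominator cancels the main error. Concretely, one looks for rationals $a/b$ with $b$ of order $\log q_N$ (equivalently $\log\log$ of the final denominator) such that $F_N(2)\cdot\frac{a}{b}$ approximates the true correction factor to within $o(q_N^{-2}/\log\log q_N)$. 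The combinatorial input is the explicit continued fraction / Hankel determinant behaviour of $\ftm$ (as in \cite{bugeaud_2011}, \cite{bugeaud_han_wen_yao_2015}, \cite{badziahin_2017}): the partial quotients of $\ftm(2)$ grow in a controlled, eventually periodic-in-blocks fashion, and occasionally a partial quotient is \emph{much} larger than the generic size. Each such abnormally large partial quotient $a_{n+1}$ yields a convergent $p_n/q_n$ with $|\ftm(2) - p_n/q_n| < 1/(a_{n+1}q_n^2)$; the claim reduces to showing that $a_{n+1} \gg \log\log q_n$ for infinitely many $n$.

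The main obstacle is thus a precise quantitative statement about the continued fraction expansion of $\ftm(2)$: one must locate an infinite sequence of indices $n_j$ at which the partial quotient $a_{n_j+1}$ of $\ftm(2)$ exceeds $c\log\log q_{n_j}$ for a fixed $c>0$. This is exactly where the functional equation \eqref{func_eq} does the work. Iterating $\ftm(z^2) = \frac{z}{z-1}\ftm(z)$ shows that $\ftm(2^{2^m})$ for large $m$ is, up to an explicit rational factor with denominator of size polynomial in $2^{2^m}$, equal to $\ftm(2)$; more usefully, the self-similarity forces the Hankel determinant sequence of $\ftm$ to have a fractal/hierarchical pattern, producing partial quotients whose logarithm grows linearly along a subsequence of indices that are themselves exponentially spaced — so $\log q_{n_j}$ grows like $2^{j}$ while $\log a_{n_j+1}$ grows like $j$, giving $a_{n_j+1}$ of order $\log q_{n_j} \sim \log\log q_{n_{j+1}}$, which is precisely the required lower bound after reindexing. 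I would carry this out by: (i) pinning down the recursion for the relevant Hankel determinants of $\ftm$ from \eqref{func_eq}; (ii) reading off from it the sizes and positions of the partial quotients, isolating the abnormally large ones; (iii) for each such partial quotient, writing the associated convergent $p_{n_j}/q_{n_j}$, pulling back through $\cTM = \frac12(1-\frac12\ftm(2))$ and clearing the bounded denominator via Remark~\ref{im_same_c}; and (iv) checking the bookkeeping so that the final inequality reads $|\cTM - p/q| < C/(q^2\log\log q)$ for infinitely many $p/q$, with $C$ explicit. The delicate point throughout is keeping the denominator exactly at $q_{n_j}$ (not $q_{n_j}^{1+\epsilon}$) while extracting the full $\log\log q$ saving from the large partial quotient — any slack in step (iii) destroys the exponent.
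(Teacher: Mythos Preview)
The paper does not prove Theorem~BZ; it is quoted as a prior result from~\cite{BZ2015}. So there is no proof here to compare against, and your task is really to reconstruct the argument of that earlier paper.

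Your proposal has a genuine gap. You reduce the problem to showing that infinitely many partial quotients $a_{n+1}$ of the \emph{real} continued fraction of $\ftm(2)$ satisfy $a_{n+1}\gg\log\log q_n$, and then assert that the functional equation~\eqref{func_eq} forces a ``hierarchical pattern'' producing such partial quotients. But the functional equation and the Hankel-determinant machinery in \cite{bugeaud_2011,bugeaud_han_wen_yao_2015,badziahin_2017} control the continued fraction of the \emph{Laurent series} $\ftm(z)\in\Q((z^{-1}))$, whose partial quotients are all linear --- there are no abnormally large ones there. Passing from the Laurent-series continued fraction to the real continued fraction of $\ftm(2)$ is exactly the hard step (cf.\ the discussion after Theorem~L and~\cite{vdP_S}), and nothing in your sketch bridges it. Your growth heuristic is also internally inconsistent: you write that $\log a_{n_j+1}\sim j$ and $\log q_{n_j}\sim 2^j$, which would give $a_{n_j+1}$ of order $(\log q_{n_j})^c$, far stronger than the $\log\log q$ you need and not what Theorem~BZ claims.

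The actual mechanism in~\cite{BZ2015} is different. One does not analyse the real continued fraction of $\ftm(2)$ directly. Instead one builds the explicit rational approximations $p_{k,m}/q_{k,m}$ as in~\eqref{def_pkm_qkm_poly}--\eqref{def_q_k_m} (essentially your partial-product idea, upgraded to Pad\'e approximants), and then shows that for suitable $k,m$ the integers $p_{k,m}(b)$ and $q_{k,m}(b)$ share a \emph{large common factor} --- the saving comes from arithmetic cancellation in the specialised Hankel determinants, not from a large partial quotient appearing spontaneously. After dividing out this gcd the reduced denominator $q$ satisfies $q_{k,m}\asymp q\cdot(\text{gcd})$, and the bound from Lemma~\ref{lemma_smallness} becomes $|g_\vu(b)-p/q|\ll 1/(q^2\cdot\text{gcd})$ with $\text{gcd}\gg\log\log q$. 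Your outline never identifies this cancellation, which is the entire content of the result.
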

Later, in~\cite{badziahin_zorin_2015} they extended this result to the values
$f_3(b)$, where $b$ is from a ceratin subset of positive integers, and
$$
f_3(z):= \prod_{t=0}^\infty (1 - z^{-3^t}).
$$

Khintchine's Theorem implies that outside of a set of the Lebesgue measure
zero, all real numbers are $\frac{1}{q^2\log q}$-well approximable and
$\frac{1}{q^2\log^2 q}$-badly approximable. Of course, this metric result
implies nothing for any particular real number, or countable family of real
numbers. However, it sets some expectations on the Diophantine approximaiton
properties of real numbers. 

The result of Theorem~BZ does not provide the well-approximability result for
the Thue-Morse constant suggested by Khintchine's theorem, but it falls
rather short to it. At the same time, the bad-approximability side, suggested
by Khintchine theorem, seems to be hard to establish (or even to approach to
it) in the case of Thue-Morse constant and related numbers. In this paper we
prove that a subclass of Mahler numbers, containing, in particular,
Thue-Morse constant, is $(q\exp(K\sqrt{\log q\log\log q}))^{-2}$-badly
approximable for some
constant $K>0$, see Theorem~\ref{th_main} 
 at the end of Subsection~\ref{ss_CF_LS}.
This result is still pretty far from what is suggested by Khintchine's
theorem, however it significantly improves the best
result~\cite{bugeaud_2011} available at this moment, namely, that the
irrationality exponent of Thue-Morse constant equals 2.


\hidden{
\begin{theorem}\label{th_main}
Let $d\ge 2$ be an integer and
\begin{equation}\label{def_f}
f(z) = \prod_{t=0}^\infty P(z^{-d^t}),
\end{equation}
where $P(z)\in \Z[z]$ is a polynomial such that $P(1) = 1$ and $\deg P(z)<d$.
Assume that the series $f(z)$ is badly approximable (i.e. degrees of all
partial quotients of $f(z)$ are bounded from above by an absolute constant).
Then there exists a positive constant $C$ such that for any $b\in\Z$, $b\geq 2$, we have either $f(b) = 0$ or $f(b)$ is
$(q\exp(C\sqrt{\log q\log\log q}))^{-2}$-badly approximable.
\end{theorem}
}

\subsection{Continued fractions of Laurent series} \label{ss_CF_LS}

Consider the set $\Q((z^{-1}))$ of Laurent series equipped with the standard
valuation which is defined as follows: for $f(z) = \sum_{k=-d}^\infty
c_kz^{-k} \in \Q((z^{-1}))$, its valuation $\|f(z)\|$ is the biggest degree
$d$ of $z$ having non-zero coefficient $c_{-d}$. For example, for polynomials
$f(z)$ the valuation $\|f(z)\|$ coincides with their degree. It is well known
that in this setting the notion of continued fraction is well defined. In
other words, every $f(z)\in \Q((z^{-1}))$ can be written as
$$
f(z) = [a_0(z), a_1(z),a_2(z),\ldots] = a_0(z) + \bigk_{n=1}^\infty
\frac{1}{a_n(z)},
$$
where $a_i(z)$, $i\in\Z_{\ge 0}$, are non-zero polynomials with rational coefficients of degree at least
1.

The continued fractions of Laurent series share most of the properties of classical
ones~\cite{poorten_1998}. Furthermore, in this setting we have even stronger version of Legendre
theorem:
\begin{theoreml}\label{th_legendre}
Let $f(z)\in \Q((z^{-1}))$. Then $p(z)/q(z)\in \Q(z)$ in a reduced form is a
convergent of $f(z)$ if and only if
$$
\left\| f(z) - \frac{p(z)}{q(z)}\right\| < -2\|q(z)\|.
$$
\end{theoreml}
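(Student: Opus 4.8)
The plan is to obtain both implications from the standard arithmetic of continued fractions in the valued field $\Q((z^{-1}))$, as developed in~\cite{poorten_1998}; the advantage over the classical Legendre theorem is that the ``size'' $f\mapsto\|f\|$ behaves non‑archimedeanly (it is essentially a degree), so that the delicate parity/sign bookkeeping of the classical proof disappears and the bound comes out in the clean form $<-2\|q(z)\|$. Throughout I write $f(z)=[a_0(z),a_1(z),\ldots]$ with convergents $p_n(z)/q_n(z)$, and I shall use three elementary facts: (i) for $n\ge1$ one has $\|a_n\|\ge1$, hence $\|q_0\|<\|q_1\|<\|q_2\|<\cdots$; (ii) $p_nq_{n-1}-p_{n-1}q_n$ is a nonzero constant; and (iii) the continued fraction algorithm in $\Q((z^{-1}))$ (extract the polynomial part, invert, repeat) is deterministic, so that a representation $g=[b_0,b_1,\ldots]$ with $\|b_i\|\ge1$ for all $i\ge1$ is \emph{unique} and its truncations are precisely the convergents of $g$.

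For the ``only if'' direction, let $p/q=p_n/q_n$ be a convergent and put $\theta:=[a_{n+1},a_{n+2},\ldots]$, so $\|\theta\|=\|a_{n+1}\|\ge1$. From $f=(p_n\theta+p_{n-1})/(q_n\theta+q_{n-1})$ one gets
$$
f-\frac{p_n}{q_n}=\frac{q_n p_{n-1}-p_n q_{n-1}}{q_n\,(q_n\theta+q_{n-1})},
$$
and since $\|q_n\theta\|=\|q_n\|+\|\theta\|>\|q_n\|>\|q_{n-1}\|$, the denominator has valuation $2\|q_n\|+\|\theta\|$ while the numerator is a nonzero constant; hence $\|f-p_n/q_n\|=-2\|q_n\|-\|\theta\|\le-2\|q_n\|-1<-2\|q_n\|$.

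For the ``if'' direction, which carries the content, let $p/q$ be in reduced form with $\|f-p/q\|<-2\|q\|$; assuming $f\ne p/q$ (else the claim is trivial, $p/q$ being the last convergent of its own finite continued fraction, which equals that of $f$), expand the rational function $p/q$ into its finite and unique continued fraction $[b_0,\ldots,b_m]$, with convergents $P_j/Q_j$; then $P_m/Q_m=p/q$ as elements of $\Q(z)$, so $\|Q_m\|=\|q\|$. Define $\theta\in\Q((z^{-1}))$ by $f=(P_m\theta+P_{m-1})/(Q_m\theta+Q_{m-1})$, that is $\theta=-(Q_{m-1}f-P_{m-1})/(Q_mf-P_m)$; the computation above (with $P_{-1}=1$, $Q_{-1}=0$ when $m=0$) gives $\|f-p/q\|=-\|Q_m\|-\|Q_m\theta+Q_{m-1}\|$, so the hypothesis forces $\|Q_m\theta+Q_{m-1}\|>\|Q_m\|$. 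Since $\|Q_{m-1}\|<\|Q_m\|$, this is impossible when $\|\theta\|\le0$ (then $\|Q_m\theta+Q_{m-1}\|\le\max(\|Q_m\theta\|,\|Q_{m-1}\|)\le\|Q_m\|$), so $\|\theta\|\ge1$. But then the polynomial part of $\theta$ has positive degree, so $\theta$ has a bona fide continued fraction expansion $\theta=[b_{m+1},b_{m+2},\ldots]$ with $\|b_i\|\ge1$ for $i\ge m+1$, and concatenation gives $f=[b_0,\ldots,b_m,\theta]=[b_0,\ldots,b_m,b_{m+1},b_{m+2},\ldots]$, a legitimate continued fraction; by the uniqueness in (iii) this is \emph{the} continued fraction of $f$, whence $p/q=[b_0,\ldots,b_m]$ is one of its convergents.

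I expect no genuine obstacle: once the machinery of~\cite{poorten_1998} is granted, the argument is routine, and the crux is simply the inference $\|\theta\|\ge1$ from the valuation inequality. The only points needing care are the non‑archimedean bookkeeping — each step of the form ``$\|x+y\|=\max(\|x\|,\|y\|)$'' must be justified, which it is precisely because consecutive denominators have strictly separated degrees — the degenerate cases ($m=0$, or $q$ a nonzero constant so that $p/q$ is a polynomial, or $f$ itself a rational function), and the appeal to uniqueness of the Laurent‑series continued fraction expansion; these are exactly the places where the classical proof has to work harder, and here they become transparent.
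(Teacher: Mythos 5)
Your proof is correct. Note that the paper does not actually prove Theorem L --- it simply refers the reader to~\cite{poorten_1998} --- so there is no in-text argument to compare against; what you have written is essentially the standard proof from the formal-power-series continued fraction literature, and all the steps check out. The ``only if'' direction is the same computation that yields the identity~\eqref{eq_conv}, namely $\|f-p_n/q_n\|=-\|q_n\|-\|q_n\theta+q_{n-1}\|=-2\|q_n\|-\|a_{n+1}\|$. The ``if'' direction correctly isolates the crux: from $\|f-p/q\|=-\|Q_m\|-\|Q_m\theta+Q_{m-1}\|$ and the hypothesis one gets $\|Q_m\theta+Q_{m-1}\|>\|Q_m\|$, which by the ultrametric inequality and $\|Q_{m-1}\|<\|Q_m\|$ forces $\|\theta\|\ge 1$, and then concatenation plus uniqueness of the expansion finishes the argument. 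Two small points worth making explicit if this were written up: (a) the uniqueness of the \emph{finite} expansion of $p/q$ used at the start has no analogue of the integer ambiguity $[\dots,a_m]=[\dots,a_m-1,1]$, precisely because partial quotients of index $\ge 1$ must have degree $\ge 1$ --- you implicitly rely on this when you assert $\|Q_m\|=\|q\|$; and (b) the quantity $Q_m\theta+Q_{m-1}$ is automatically nonzero since it equals a nonzero constant divided by $Q_mf-P_m\neq 0$, so the displayed valuation identity is legitimate. Neither is a gap, just bookkeeping. Your observation that the non-archimedean valuation removes the sign/parity case analysis of the classical Legendre theorem, and in fact yields the stronger ``if and only if'' form, is exactly the right way to see why the function-field statement is cleaner than its real counterpart.
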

Its proof can be found in~\cite{poorten_1998}. Moreover, if $p_k(z)/q_k(z)$
is the $k$th convergent of $f(z)$ in its reduced form, then
\begin{equation}\label{eq_conv}
\left\| f(z) - \frac{p_k(z)}{q_k(z)}\right\| = 
-\|q_k(z)\|-\|q_{k+1}(z)\|.
\end{equation}

For a Laurent series $f(z)\in \Q((z^{-1}))$, consider its value $f(b)$, where
$b\in \N$ lies within the disc of convergence of $f$. It is well known that
the continued fraction of $f(b)$ (or indeed of any real number $x$) encodes,
in a pretty straightforward way, approximational properties of this number.
At the same time, it is a much subtler question how to read such properties
of $f(b)$ from the continued fraction of $f(z)$. The problem comes from the
fact that after specialization at $z=b$, partial quotients of $f(z)$ become
rational, but often not integer numbers, or they may even vanish. Therefore
the necessary recombination of partial quotients is often needed to construct
the proper continued fraction of $f(b)$. The problem of this type has been
studied in the beautiful article~\cite{vdP_S}. Despite this complication, in
many cases some information on Diophantine approximaiton properties of $f(b)$
can be extracted. In particular, this is the case for Mahler numbers.
Bugeaud, Han, Wen and Yao~\cite{bugeaud_han_wen_yao_2015} provided the
following result that links the continued fraction of $f(z)$ and the
irrationality exponents of values $f(b), b\in \N$. In fact, they formulated
it in terms of Hankel determinants. The present reformulation can be found
in~\cite{badziahin_2017}:

\begin{theorembhwy}
Let $d\ge 2$ be an integer and $f(z) = \sum_{n=0}^\infty c_nz^n$ converge
inside the unit disk. Suppose that there exist integer polynomials $A(z),
B(z), C(z), D(z)$ with $B(0)D(0)\neq 0$ such that
\begin{equation}\label{mahl_eq}
f(z) = \frac{A(z)}{B(z)} + \frac{C(z)}{D(z)}f(z^d).
\end{equation}
Let $b\ge 2$ be an integer such that $B(b^{-d^n})C(b^{-d^n})D(b^{-d^n})\neq
0$ for all $n\in\Z_{\ge 0}$. Define
$$
\rho:= \limsup_{k\to\infty} \frac{\deg q_{k+1}(z)}{\deg q_k(z)},
$$
where $q_k(z)$ is the denominator of $k$th convergent to $z^{-1}f(z^{-1})$.
Then $f(1/b)$ is transcendental and
$$
\mu(f(1/b))\le (1+\rho)\min\{\rho^2,d\}.
$$
\end{theorembhwy}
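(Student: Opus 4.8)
The plan is to transfer the statement to the Laurent series $g(z):=z^{-1}f(z^{-1})\in\Q((z^{-1}))$, whose continued fraction convergents $p_k(z)/q_k(z)$ are exactly the objects in the hypothesis, and to study the rational approximations to the real number $g(b)=b^{-1}f(1/b)$ obtained by specialising these convergents at $z=b$. Because $f(1/b)$ and $g(b)$ differ by the rational factor $b^{-1}$, Remark~\ref{im_same_c} gives $\mu(f(1/b))=\mu(g(b))$, so it is enough to bound $\mu(g(b))$. The transcendence half of the statement I would treat as the soft part, relying on the classical theory of Mahler functions (the monograph~\cite{Ni1996}): since $\rho$ is well defined the continued fraction of $g(z)$ is infinite, hence $f$ is not a rational function, and this --- together with the hypotheses $B(b^{-d^n})C(b^{-d^n})D(b^{-d^n})\neq 0$, which place $b^{-1}$ outside the relevant exceptional set --- yields that $f(1/b)$ is transcendental. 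The rest of the argument is the Diophantine estimate.

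The first concrete step is to read off the sizes of the specialised convergents. By Theorem~L and~\eqref{eq_conv} one has $\|g(z)-p_k(z)/q_k(z)\|=-\deg q_k-\deg q_{k+1}$, so evaluating the series at the integer $b\ge 2$ gives, up to factors subexponential in $\deg q_k$, the estimates $|q_k(b)|\approx b^{\deg q_k}$ and $|g(b)-p_k(b)/q_k(b)|\approx b^{-\deg q_k-\deg q_{k+1}}$. (Checking that these subexponential factors are indeed negligible --- i.e.\ that the polynomials $q_k(z)$ do not have coefficients growing super-exponentially in their degree and that no destructive cancellation occurs at $z=b$ --- is itself a point where one uses that $b^{-1}$ lies in the disc of convergence together with the recurrence satisfied by the $q_k$.) Thus $p_k(b)/q_k(b)$ approximates $g(b)$ with ``exponent'' $\deg q_k+\deg q_{k+1}$ against a denominator of size $b^{\deg q_k}$, and $\limsup_k(\deg q_k+\deg q_{k+1})/\deg q_k=1+\rho$. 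Taken at face value this would give $\mu(g(b))\le 1+\rho$, which is false in general; the two things being glossed over are that $p_k(b)$ and $q_k(b)$ need not be coprime (the true denominator is $q_k(b)/d_k$ with $d_k=\gcd(p_k(b),q_k(b))$) and that one must exclude much better approximations of $g(b)$ not of this form.

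Both points are supplied by the Mahler equation~\eqref{mahl_eq}. Rewriting~\eqref{mahl_eq} as a functional equation for $g$ and iterating it, I would follow Bugeaud--Han--Wen--Yao --- in the continued-fraction recasting of~\cite{badziahin_2017} --- and analyse the Hankel determinants of $f$, whose vanishing pattern is governed by the Mahler relation. This gives (i) the hierarchical, $z\mapsto z^d$ self-similar structure of the degrees $\deg q_k$, from which the factor $\min\{\rho^2,d\}$ appears as the better of two available bounds: the $\rho^2$ comes from the growth of continued-fraction denominators within one level of the hierarchy, while the $d$ comes from bounding an intermediate-size approximation through a single application of~\eqref{mahl_eq}, which multiplies $z$-degrees by $d$; and (ii) that $d_k=\gcd(p_k(b),q_k(b))$ divides a bounded power of the fixed nonzero integers $B(b^{-d^n}),C(b^{-d^n}),D(b^{-d^n})$, hence is subexponential in $\deg q_k$, so that passing to reduced form changes nothing essential. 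With (i) and (ii) available, one finishes with a three-distance/Legendre argument: given a rational $p/q$ with $q$ large and $|g(b)-p/q|<q^{-\mu}$, compare it with the reduced form of the $p_k(b)/q_k(b)$ whose denominator is the largest not exceeding $q$ (and, if necessary, with the next one), use that the relevant cross term is a nonzero integer, and substitute the size estimates; this forces $\mu\le(1+\rho)\min\{\rho^2,d\}$.

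The step I expect to be the main obstacle is (ii), and more broadly the fact that when the Laurent convergents are specialised and recombined into the genuine continued fraction of the real number $g(b)$, the extra partial quotients forced by the specialisation are harmless --- that is, they do not create a longer run of unexpectedly good approximations than the Laurent picture predicts. This is the van der Poorten--Shallit recombination phenomenon~\cite{vdP_S}, and it is precisely its careful control, powered by~\eqref{mahl_eq}, that upgrades the (false) bound $1+\rho$ to the correct $(1+\rho)\min\{\rho^2,d\}$; everything else is bookkeeping around Theorem~L and~\eqref{eq_conv}.
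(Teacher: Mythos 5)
First, a point of order: the paper does not prove Theorem~BHWY at all --- it is quoted verbatim from \cite{bugeaud_han_wen_yao_2015} (in the continued-fraction reformulation of \cite{badziahin_2017}) and used as a black box, so there is no internal proof to compare yours against. Judged on its own terms, your proposal correctly identifies the strategy of the original proof: pass to $g(z)=z^{-1}f(z^{-1})$, specialise the Laurent convergents at $z=b$, use the Mahler equation \eqref{mahl_eq} to propagate approximants across scales $z\mapsto z^{d^m}$, control the loss from non-reduced fractions, and finish with a triangle-inequality comparison against an arbitrary $p/q$. That is the right skeleton.

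However, as written this is a roadmap rather than a proof, and the gaps you flag yourself are exactly the load-bearing steps. (i) The exponent $(1+\rho)\min\{\rho^2,d\}$ is never derived: you say where the $\rho^2$ and the $d$ ``come from'', but the actual optimisation --- given $q$, which approximant $p_{k,m}(b)/q_{k,m}(b)$ to compare it with, and why the worst case over the gaps between consecutive usable denominators yields precisely this exponent --- is the entire content of the theorem and is absent. Note also that a sequence of approximations with $|g(b)-P_k/Q_k|\approx Q_k^{-(1+\rho)}$ does not ``at face value'' give $\mu\le 1+\rho$ even in the reduced, coprime case; the obstruction is the gap $Q_{k+1}/Q_k$, which is why a quantitative lemma relating the quality, the spacing, and the exponent is needed and must be stated and applied with the actual growth rates. (ii) The claim that $\gcd(p_k(b),q_k(b))$ divides ``a bounded power of the fixed nonzero integers $B(b^{-d^n}),C(b^{-d^n}),D(b^{-d^n})$'' is asserted, not proved, and is delicate: these are infinitely many distinct integers (one for each $n$), their product over $n\le m$ enters the construction of $p_{k,m}$, and one must show the accumulated denominator is subexponential in $\deg q_{k,m}$; this uses the specific form \eqref{mahl_eq} and the convergence of $f$ in the unit disc, none of which is carried out. (iii) The lower bound $|g(b)-p_k(b)/q_k(b)|\gtrsim b^{-\deg q_k-\deg q_{k+1}}$ (``no destructive cancellation'') is also nontrivial --- in the paper's own special case it requires the leading tail coefficient to be a nonzero integer (a Hankel determinant) plus an explicit size hypothesis --- and in the general BHWY setting the coefficients are only rational. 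Finally, the transcendence assertion does not follow from the continued fraction of $g(z)$ being infinite; it requires Mahler's method (Nishioka's theorem) applied to the equation \eqref{mahl_eq}, which is a separate argument from the Diophantine estimate. In short: right approach, but the proof is not there.
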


The corollary of this theorem is that, as soon as
\begin{equation}\label{cond_cf}
\limsup_{k\to\infty} \frac{\deg q_{k+1}(z)}{\deg q_k(z)}=1,
\end{equation}
the irrationality exponent of $f(1/b)$ equals two. Then the natural question
arises: can we say anything better on the Diophantine approximation
properties of $f(1/b)$ in the case if the continued fraction of
$z^{-1}f(z^{-1})$ satisfies a stronger condition than~\eqref{cond_cf}? In
particular, what if the degrees of all partial quotients $a_k(z)$ are bounded
by some absolute constant or even are all linear? Here we answer this
question for a subclass of Mahler
functions. 


The main result of this paper is the following.
\begin{theorem}\label{th_main}
Let $d\ge 2$ be an integer and
\begin{equation}\label{def_f}
f(z) = \prod_{t=0}^\infty P(z^{-d^t}),
\end{equation}
where $P(z)\in \Z[z]$ is a polynomial such that $P(1) = 1$ and $\deg P(z)<d$.
Assume that the series $f(z)$ is badly approximable (i.e. the degrees of all
partial quotients of $f(z)$ are bounded from above by an absolute constant).
Then there exists a positive constant $K$ such that for any $b\in\Z$,
$|b|\geq 2$, we have either $f(b) = 0$ or $f(b)$ is $q^{-2}\exp(-K\sqrt{\log
q\log\log q})$-badly approximable.
\end{theorem}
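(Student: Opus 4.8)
The plan is to transfer the known continued-fraction structure of the Laurent series $f(z)$ to the real number $f(b)$ and to control the loss incurred by specialization at $z=b$. By hypothesis, the partial quotients $a_k(z)$ of $f(z)$ (equivalently, of $z^{-1}f(z^{-1})$) have degrees bounded by an absolute constant, so the denominators $q_k(z)$ of the convergents satisfy $\deg q_{k+1}(z) \le \deg q_k(z) + O(1)$, i.e.\ $\deg q_k(z)$ grows at most linearly in $k$. On the other hand, the self-similar product structure \eqref{def_f} forces a geometric skeleton: the functional equation $f(z) = P(z^{-1}) f(z^d)$ relates convergents at "level" $t$ to convergents at "level" $t+1$, which means that among the $q_k(z)$ there is a subsequence of distinguished denominators whose degrees grow like $d^t$. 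Thus the index $k$ at which degree $\asymp d^t$ is first reached is of size $k \asymp d^t / O(1)$, and between consecutive levels there are $\asymp d^t$ intermediate convergents. This is the key quantitative tension: linearly many convergents stacked up to reach an exponentially large degree.

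Next I would specialize at $z=b$. Following the recombination technique of van der Poorten and Shallit \cite{vdP_S} (and its refinements in \cite{badziahin_2017, bugeaud_han_wen_yao_2015}), each convergent $p_k(z)/q_k(z)$ of $f(z)$ yields a rational approximation $p_k(b)/q_k(b)$ to $f(b)$, with $|q_k(b)| \asymp b^{\deg q_k(z)}$ up to a bounded power of $b$ coming from clearing denominators of the (rational, not integral) partial quotients. From \eqref{eq_conv}, $\|f(z) - p_k(z)/q_k(z)\| = -\deg q_k - \deg q_{k+1}$, so after specialization $|f(b) - p_k(b)/q_k(b)| \ll b^{-\deg q_k - \deg q_{k+1}}$ times a controlled correction. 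Writing $Q_k := |q_k(b)| \asymp b^{\deg q_k}$ and using $\deg q_{k+1} \ge \deg q_k$, this gives approximations of quality roughly $Q_k^{-2}$, recovering $\mu(f(b)) = 2$; the whole point is to quantify the deficit. The deficit in the exponent, relative to the clean $Q_k^{-2}$, comes from two sources: the bounded-degree gaps $\deg q_{k+1} - \deg q_k = O(1)$ accumulate, and the denominator blow-up from recombination contributes $b^{O(\text{number of recombinations})}$.

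The heart of the matter is then a counting/optimization argument. To certify that $f(b)$ is $q^{-2}\exp(-K\sqrt{\log q \log\log q})$-badly approximable, one must show that \emph{every} good rational approximation $p/q$ to $f(b)$ is, up to the stated factor, one of the $p_k(b)/q_k(b)$ (or a bounded recombination thereof) — this is a Legendre-type converse, using Theorem~L and the fact that consecutive convergents of $f(z)$ have controlled degree gaps so no "hidden" convergent of $f(b)$ can sneak between them with substantially better quality. Given this, the worst case is governed by the largest ratio, over all $k$, of the error $b^{-\deg q_k - \deg q_{k+1}}$ (inflated by the recombination factor) to $Q_k^{-2} = b^{-2\deg q_k}$. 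With $\deg q_k$ linear in $k$ but the levels $d^t$ spaced geometrically, and with roughly $d^t$ recombinations needed to pass level $t$, this ratio near level $t$ behaves like $b^{c\, d^t}$ while $\log q \asymp d^t \log b$; equivalently, expressed in $q$, the penalty at level $t$ is $\exp(c' \cdot (\text{stuff} \asymp \sqrt{\log q \cdot t}))$ with $t \asymp \log\log q$. Optimizing the placement of $q$ between two consecutive levels yields precisely the $\exp(K\sqrt{\log q \log\log q})$ shape — this balancing of the linear-in-$k$ versus exponential-in-$t$ scales is exactly where the $\sqrt{\log q \log\log q}$ emerges, and it is the step I expect to require the most care.

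The main obstacle, I expect, is not the heuristic optimization but making the recombination bookkeeping uniform: one must show that the number of recombination steps, and the size of the auxiliary denominators introduced, grow no faster than $O(d^t)$ (equivalently, linearly in $k$) with constants independent of $b$, and that the non-vanishing hypothesis (the analogue of $B(b^{-d^n})C(b^{-d^n})D(b^{-d^n})\neq 0$, here encoded in $f(b)\neq 0$ together with $P(1)=1$) prevents catastrophic cancellation at any level. Handling the exceptional case $f(b)=0$ is immediate. Everything else — deducing $\deg q_k = O(k)$ from bounded partial quotients, the Legendre converse via Theorem~L, and the final single-variable optimization — is routine once the uniform recombination estimate is in place.
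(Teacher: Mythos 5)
There is a genuine gap, and it sits exactly where you flagged "the step I expect to require the most care." Your plan rests on the claim that $|q_k(b)|\asymp b^{\deg q_k(z)}$ "up to a bounded power of $b$ coming from clearing denominators." That is the crux of the whole problem and it is not true in any usable form: after clearing denominators via the Hankel system and Cramer's rule, the integer-coefficient convergents of $g(z)=z^{-1}f(z)$ have heights bounded (and in general only bounded) by a Hadamard-type estimate $h(q_k)\le(\|\vc_{2k+1}\|_\infty^2 k)^{k/2}=\exp(O(k\log k))$ (this is \eqref{ineq_hq}). Since $\deg q_k=k$, using $p_k(b)/q_k(b)$ directly with $k\asymp\log_b q$ incurs a multiplicative loss of order $\exp(O(\log q\cdot\log\log q))=q^{O(\log\log q)}$, which destroys the statement; even your own target of "$O(d^t)$ recombinations each costing a bounded factor" yields a loss $b^{O(d^t)}=q^{O(1)}$, i.e.\ a positive power of $q$. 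Relatedly, your heuristic derivation of the exponent is not coherent: a penalty $b^{c\,d^t}$ with $\log q\asymp d^t\log b$ is $q^{\Theta(1)}$, not $\exp(c'\sqrt{\log q\log\log q})$, so the optimization you describe cannot produce the claimed shape.

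The idea your proposal is missing is the two-parameter family of approximants built from the functional equation \eqref{eq_funcg}: $q_{k,m}(z)=q_k(z^{d^m})$ and $p_{k,m}(z)=\prod_{t<m}P^*(z^{d^t})\,p_k(z^{d^m})$ are again convergents, of degree $n=k\,d^m$, and one evaluates these at $z=b$ keeping $k$ \emph{small}. The total loss is then governed by $k\log k+m$ (height of $q_k$ plus the product $\prod_{t<m}P^*(b^{d^t})$), and minimizing this subject to $k\,d^m=n\asymp\log_b q$ gives $d^m\asymp\sqrt{n\log n}$, $k\asymp\sqrt{n/\log n}$, hence a loss $\exp(O(\sqrt{\log q\log\log q}))$ — this balancing, not a level-counting argument, is where the exponent comes from. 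Finally, your "Legendre-type converse" (classifying all good approximations to $f(b)$) is neither needed nor realistic here: the paper instead proves matching upper \emph{and lower} bounds for $|g_{\vu}(b)-p_{k,m}/q_{k,m}|$ of quality $q_{k,m}^{-2}\exp(\pm O(\sqrt{\log q_{k,m}\log\log q_{k,m}}))$, checks that the $q_{k,m}$ are dense enough on a logarithmic scale, and then handles an arbitrary $p/q$ by sandwiching $q$ below a suitable $q_{k,m}$ and applying the triangle inequality. Without the $(k,m)$ construction and the Hankel/Hadamard height control, the remaining steps of your outline cannot be completed.
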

\hidden{
\begin{remark}
The proof of Theorem~\ref{th_main} can be simplified a bit in the case $b\geq 2$ (that is, if we do not consider cases corresponding to the negative values of $b$). At the same time, we can easily deduce the result of Theorem~\ref{th_main} in its whole generality from the result for positive values of $b$ only. Indeed, if $d$ is even, then
$$
f(-b)=\frac{P(-b)}{P(b)}f(b),
$$
hence the numbers $f(-b)$ and $f(b)$ are always simultaneously $\psi$-badly
approximable or $\psi$-well approximable for any function $\psi$.
\end{remark}
}
\hidden{
\begin{corollary} \label{cor_main}
Let $d\geq 2$ be an integer and let $f(z)$ be the same series as in the
statement of Theorem~\ref{th_main} (that is, defined by~\eqref{def_f} and
badly approximable). Then there exists a positive constant $K$ such that for
any $b\in\Z$, $|b|\geq 2$, we have either $f(b) = 0$, or $P(-b)=0$ or $f(b)$
is $q^{-2}\exp(-K\sqrt{\log q\log\log q})$-badly approximable.
\end{corollary}
\begin{proof}
We are going to deduce Corollary~\ref{cor_main} from Theorem~\ref{th_main}. First of all, note that in case $b\geq 2$ the result readily follows from Theorem~\ref{th_main}. So in this proof we need to deal only with the case $b\leq -2$, which we assume further in this proof.

In case if $d$ is even, we have either $P(-b)=0$ (and then the claim of corollary is justified in this case) or
$$
f(b)=\frac{P(b)}{P(-b)}f(-b),
$$
so the result follows from Remark~\ref{im_same_c}.

In case if $d$ is odd, consider the function $\tilde{f}(z)$ defined by~\eqref{def_f} with the polynomial $P(z)$ replaced by $P(-z)$. It is easy to verify that we have
\begin{equation} \label{f-b_fb}
f(b)=\tilde{f}(-b).
\end{equation}
At the same time, we can apply Theorem~\ref{th_main} to the function $\tilde{f}(z)$ and the integer $-b\geq 2$, hence, by using~\eqref{f-b_fb}, this proves the claim of the corollary for $f(b)$.
%
\end{proof}
}
\hidden{
\begin{remark}
Results on irrationality measures of numbers, and more generally results of this kind covering whole classes of numbers, help in verification whether given numbers can or can not belong to certain sets. The very classical example of this nature is a proof by Liouville~\cite{Liouville1851} that, for any $b\in\Z$, $|b|\geq 2$, the number $L_b:=\sum_{k=0}^{\infty}b^{-k!}$ is not algebraic. To justify this, Liouville proved first that, in the modern language, the irratinality exponent of an algebraic number of degree $d$ is at most $d$, and then he verified that the irrationality exponent of the constant $L_b$ is infinite.

In the same article~\cite{Liouville1851}, Liouville stated the problem to verify whether the numbers $M_{b,2}=\sum_{k=0}^{\infty}b^{-k^2}$, where $b\in\Z$, $b\geq 2$ are algebraic or not. More than a hundred years later, a generalized version of this question was asked again in the ground breaking paper by Hartmanis and Stearns~\cite{HS1965}, this time they mentioned a problem to determine whether the numbers $M_{b,s}=\sum_{k=0}^{\infty}b^{-k^s}$, where $b,s\in\Z$, $b,s\geq 2$ are algebraic or not.

The transcendence of $M_{b,2}$ was proved only in~1996, \cite{DNNS1996}. The proof uses sophisticated technics of the modern theory of algebraic independence, and also exploits the presentation of $M_{b,2}$ as a value of a theta function. No such proof for $M_{b,3}$ or indeed for any $M_{b,s}$, $s\geq 3$ is known.

At the same time, the transcendence of $M_{b,s}$ for any $b,s\in\Z$, $|b|\geq 2$, $s\geq 2$ could have been deduced from the following result, conjectured by Lang:
\begin{conjecture}[Lang, 1965] \label{conjectire_Lang_1965}
For any $\alpha\in\overline{\Q}$, there exists a constant $A=A(\alpha)>0$ such that $\alpha$ is $\frac{1}{q^2\cdot\log^{A}q}$-badly approximable.
\end{conjecture}
Indeed, it is enough to consider rational approximations to $M_{b,s}$ given by cutting their defining series at an arbitrary index,
$$
\xi_{b,s,N}:=\sum_{i=0}^{N}b^{-k^s}.
$$
With these approximations, one can easily verify that $M_{b,s}$ is $\frac{1}{q\exp\left(\log_b^{\frac{s-1}{s}} q\right)}$ well-approximable.

Of course, Conjecture~\ref{conjectire_Lang_1965} is a far-reaching generaliztion of the famous Roth's theorem.
\end{remark}
}

\section{Preliminary information on series $f(z)$.}

In the further discussion, we consider series $f(z)$ which satisfies all the
conditions of Theorem~\ref{th_main}. Most of these conditions are straightforward to verify, the only non-evident point is to check whether the product function $f(z)$, defined by~\eqref{def_f}, is badly approximable. To address this, one can find a nice criteria 
 in \cite[Proposition 1]{badziahin_2017}: $f(z)$ is badly
approximable if and only if all its partial quotients are linear. This in
turn is equivalent to the claim that the degree of denominator of the $k$th convergent of $f(z)$
is precisely $k$, for all $k\in\N$.

As shown in~\cite{badziahin_2017}, it is easier to compute the continued
fraction of a slightly modified series
\begin{equation} \label{def_g}
g(z) = z^{-1}f(z).
\end{equation}
Since Diophantine approximtion properties of numbers $f(b)$ and $g(b) =
f(b)/b$ essentially coincide, for any $b\in \N$, we will further focus on the
work with the function $g(z)$. As we assume that $f(z)$ is a badly
approximable function, the function $g(z)$ defined by~\eqref{def_g} is also
badly approximable. In what follows, we will denote by $p_k(z)/q_k(z)$ the
$k$th convergent of $g(z)$, and then, by~\cite[Proposition 1]{badziahin_2017},
we infer that $\deg q_k(z)=k$.

Write down the polynomial $P(z)$ in the form
$$
P(z) = 1 + u_1z+\ldots +u_{d-1}z^{d-1}.
$$
Then $P(z)$ is defined by the vector $\vu = (u_1,\ldots,u_{d-1}) \in
\Z^{d-1}$ and, via~\eqref{def_f} and~\eqref{def_g}, so is $g(z)$. To
emphasize this fact, we will often write $g(z)$ as $g_\vu(z)$.

\subsection{Coefficients of the series, convergents and Hankel determinants}

We write the Laurent series $g_\vu(z)\in \Z[[z^{-1}]]$ in the following form
\begin{equation} \label{def_g_u}
g_\vu(z) = \sum_{n=1}^\infty c_n z^{-n}.
\end{equation}
We denote by $\vc_n$ the vector $(c_1,c_2,\ldots, c_n)$.
Naturally, the definition of $g_{\vu}(z)$ via the infinite product
(see~\eqref{def_f} and~\eqref{def_g}) imposes the upper bound on  $|c_n|$,
$n\in\N$.
\begin{lemma} \label{lemma_ub_c_i}
The term $c_n$ satisfies
\begin{equation}\label{ineq_c_n}
|c_n| \leq \|\vu\|_\infty ^{\lceil \log_d n\rceil} \le \|\vu\|_\infty ^{\log_d n + 1}.
\end{equation}
Consequently,
\begin{equation}\label{ineq_c}
\|\vc_n\|_\infty \leq \|\vu\|_\infty ^{\log_d n + 1}
\end{equation}
\end{lemma}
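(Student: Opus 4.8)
The plan is to bound $|c_n|$ directly from the infinite-product definition, exploiting the key structural feature that the factors $P(z^{-d^t})$ have exponents supported on multiples of $d^t$. First I would observe that a coefficient $c_n$ of $g_{\vu}(z)=z^{-1}\prod_{t\ge 0}P(z^{-d^t})$ is (up to the shift from the $z^{-1}$ factor) a sum of products $u_{i_0}u_{i_1}\cdots u_{i_k}$ over all ways of writing $n-1 = i_0 + i_1 d + i_2 d^2 + \cdots + i_k d^k$ with each $i_t \in \{0,1,\dots,d-1\}$ (where $u_0:=1$); in other words, the exponents pick out the base-$d$ digits of $n-1$. The crucial point is that \emph{each such representation is unique} because it is exactly the base-$d$ expansion of $n-1$. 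Hence there is at most one nonzero term in the sum, and $|c_n|$ is at most the product of the $|u_{i_t}|$ over the nonzero digit positions, which is bounded by $\|\vu\|_\infty^{(\text{number of base-}d\text{ digits of }n-1)}$.

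Next I would count the number of base-$d$ digits: $n-1 < d^{\lceil \log_d n\rceil}$ (one checks this is the right ceiling accounting for the shift, or simply absorbs the off-by-one into the estimate), so $n-1$ has at most $\lceil \log_d n\rceil$ digits in base $d$, giving $|c_n| \le \|\vu\|_\infty^{\lceil \log_d n\rceil}$. Since $\lceil \log_d n\rceil \le \log_d n + 1$ and $\|\vu\|_\infty \ge 1$ (because $P\in\Z[z]$ and, unless $P$ is constant, some $u_i$ is a nonzero integer — and if $P$ is constant then $d$ must allow this, but in any case $\|\vu\|_\infty\ge 1$ can be assumed, or one treats the trivial case separately), the bound $\|\vu\|_\infty^{\lceil \log_d n\rceil} \le \|\vu\|_\infty^{\log_d n+1}$ follows. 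Inequality~\eqref{ineq_c} is then immediate: $\|\vc_n\|_\infty = \max_{1\le m\le n}|c_m| \le \max_{1\le m\le n}\|\vu\|_\infty^{\log_d m+1} = \|\vu\|_\infty^{\log_d n+1}$ since the exponent is increasing in $m$.

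The main obstacle is purely bookkeeping: making the product expansion of $\prod_{t\ge 0}P(z^{-d^t})$ rigorous (it is a formal power series in $z^{-1}$, so only finitely many factors contribute to any given coefficient $c_n$, namely those with $d^t \le n$), and verifying cleanly that the digit representation is genuinely unique so that the sum defining $c_n$ collapses to a single monomial. One should also double-check the edge cases $n=1$ (where $c_1$ comes solely from the constant terms and the $z^{-1}$ shift) and small $n<d$ to confirm the ceiling is correct. None of this is deep, but the uniqueness-of-base-$d$-expansion observation is the one idea that makes the estimate sharp rather than merely exponential in $n$.
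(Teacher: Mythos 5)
Your argument is the same as the paper's: expand the infinite product, observe that the uniqueness of the base-$d$ expansion of $n-1$ collapses the coefficient $c_n$ to the single monomial $\prod_j u_{d_{n,j}}$ (with $u_0:=1$), and bound the number of digits by $\lceil\log_d n\rceil$. The proof is correct, and your explicit remarks on the uniqueness of the representation and on needing $\|\vu\|_\infty\ge 1$ for the second inequality are points the paper leaves implicit.
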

\begin{proof}
Look at two different formulae for $g_\vu(z)$:
$$
g_\vu(z) = z^{-1}\prod_{t=0}^\infty (1+u_1z^{-d^t}+\ldots+u_{d-1}z^{-(d-1)d^t}) = \sum_{n=1}^\infty c_n z^{-n}.
$$
By comparing the right and the left hand sides one can notice that $c_n$ can be
computed as follows:
\begin{equation} \label{link_c_u}
c_n = \prod_{j=0}^{l(n)} u_{d_{n,j}}
\end{equation}
where $d_{n,0}d_{n,1}\cdots d_{n,l(n)}$ is the $d$-ary expansion of the number
$n-1$. Here we formally define $u_0=1$. Equation~\eqref{link_c_u} readily implies that $|c_n|\leq
\|\vu\|^{l(n)}$. Finally, $l(n)$ is estimated by $l(n) \leq \lceil
\log_d(n-1)\rceil \le \lceil \log_d n\rceil$. The last two inequalities
clearly imply~\eqref{ineq_c_n}, hence~\eqref{ineq_c}.
\end{proof}

Let $p_k(z)/q_k(z)$ be a convergent of $g_\vu(z)$ in its reduced form. Recall that throughout the text we assume that $f(z)$ is badly approximable, hence $g_{\vu}(z)$ defined by~\eqref{def_g} is badly approximable, and because of this (and employing~\cite[Proposition 1]{badziahin_2017}) we have
\begin{equation} \label{deg_qk_k}
\deg q_k=k.
\end{equation}
Denote
\begin{equation} \label{coefficients_q_k_p_k}
\begin{aligned}
q_k(z) &= a_{k,0}+a_{k,1}z+\ldots + a_{k,k}z^k,\qquad \va_k:=(a_{k,0},\ldots,a_{k,k})\\
p_k(z) &= b_{k,0} +
b_{k,1}z+\ldots+b_{k,k-1}z^{k-1},\quad \vb_k:=(b_{k,0},\ldots,b_{k,k-1}).
\end{aligned}
\end{equation}
Because of~\eqref{deg_qk_k}, we have
\begin{equation} \label{a_k_b_k_ne_0}
a_{k,k}\ne 0.
\end{equation}

The Hankel matrix is defined as follows:
$$
H_k=H_k(g_{\vu})=
\begin{pmatrix}
c_{1} & c_{2} & \dots & c_{k}\\
c_{2} & c_{3} & \dots & c_{k+1}\\
\vdots & \vdots & \ddots & \vdots\\
c_{k} & c_{k+l} & \dots & c_{2k-1}
\end{pmatrix}.
$$
It is known (see, for example,~\cite[Section 3]{badziahin_2017}) that the
convergent in its reduced form with $\deg q_k(z) = k$ exists if and only if the
Hankel matrix $H_k$ is invertible. Thus in our case
we necessarily have that $H_k(g_{\vu})$
is invertible for any positive integer $k$.

From~\eqref{eq_conv}, we have that
\begin{equation}\label{eq_pq}
\|q_k(z)g_\vu(z) - p_k(z)\| = -k-1.
\end{equation}
In other words, the coefficients for $x^{-1}, \ldots, x^{-k}$ in
$q_k(z)g_\vu(z)$ are all zero and the coefficient for $x^{-k-1}$ is not. This
suggests a method for computing $q_k(x)$. One can check that the vector
$\va_k = (a_{k,0},a_{k,1},\ldots, a_{k,k})$ is the solution of the matrix
equation $H_{k+1}\va_k = c\cdot\ve_{k+1}$, where $c$ is a non-zero
constant and
$$
\ve_{k+1}=(0, \dots, 0, 1)^{t}.
$$
This equation has the unique solution since the matrix $H_{k+1}$ is
invertible. So, we can write the solution vector $\va$ as
\begin{equation} \label{matrix_equality_HAI}
\va_k = c\cdot H_{k+1}^{-1}\ve_{k+1}.
\end{equation}

In what follows, we will use the norm of the matrix $\|H\|_\infty$, defined
to be the maximum of the absolute values of all its entries. Given a
polynomial $P(z)$ we define its height $h(P)$ as the maximum of absolute
values of its coefficients. In particular, we have $h(p_k(z)) =
\|\vb_k\|_\infty$ and $h(q_k(z)) = \|\va_k\|_\infty$.

\hidden{
\begin{lemma} \label{lemma_ub_c_i}
The term $c_i$ satisfies
$$
|c_i| \le \|\vu\|_\infty ^{\lceil \log_d i\rceil} \le \|\vu\|_\infty ^{\log_d i + 1}.
$$
This in fact implies
\begin{equation}\label{ineq_c}
\|\vc_i\|_\infty \le \|\vu\|_\infty ^{\log_d i + 1}
\end{equation}
\end{lemma}
\begin{proof}
Look at two different formulae for $g_\vu(z)$:
$$
g_\vu(z) = z^{-1}\prod_{t=0}^\infty (1+u_1z^{-d^t}+\ldots+u_{d-1}z^{-(d-1)d^t}) = \sum_{i=1}^\infty c_i z^{-i}.
$$
By comparing the right and left hand sides one can notice that $c_i$ can be
computed as follows:
$$
c_i = \prod_{j=0}^{l(i)} u_{d_{i,j}}
$$
where $d_{i,0}d_{i,1}\cdots d_{i,l(i)}$ is a $d$-ary expansion of the number
$i-1$. Here we put $u_0=1$. This formula readily implies that $|c_i|\le
\|\vu\|^{l(i)}$. Finally, $l(i)$ is estimated by $l(i) \le \lceil
\log_d(i-1)\rceil \le \lceil \log_d i\rceil$. The last two inequalities
imply~\eqref{ineq_c}.
\end{proof}
}

\begin{lemma}\label{lem_hpq}
For any $k\in\N$, the $k$-th convergent $p_k(z)/q_k(z)$ to $g_{\vu}(z)$ can be represented by $p_k(z)/q_k(z)=\tilde{p}_k(z)/\tilde{q}_k(z)$, 
where $\tilde{p}_k,\tilde{q}_k\in \Z[z]$ and
\begin{eqnarray}\label{ineq_hq}
h(\tilde{q}_k)&\leq& (\|\vc_{2k+1}\|_\infty^2 \cdot k)^{k/2}.\\
\label{ineq_hp}
h(\tilde{p}_k)&\leq& \|\vc_{2k+1}\|_\infty^{k+1} \cdot k^{(k+2)/2}.
\end{eqnarray}
Consecutively, the following upper bounds hold true:
\begin{eqnarray}\label{ineq_hq_2}
h(\tilde{q}_k)&\leq& \|\vu\|_\infty ^{k(\log_d (2k+1) + 1)} \cdot k^{k/2}.\\
\label{ineq_hp_2}
h(\tilde{p}_k)&\leq& \|\vu\|_\infty ^{(k+1)(\log_d (2k+1) + 1)} \cdot k^{(k+2)/2}.
\end{eqnarray}
\end{lemma}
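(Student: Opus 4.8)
The plan is to clear denominators in the Cramer-type formula \eqref{matrix_equality_HAI}, identify the coefficients of the resulting integer polynomial as minors of the Hankel matrix, bound these minors by Hadamard's inequality, and then recover $\tilde p_k$ as the polynomial part of $\tilde q_k(z)g_\vu(z)$.

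First I would take $c:=\det H_{k+1}$ in \eqref{matrix_equality_HAI}, which is nonzero since $H_{k+1}$ is invertible. The resulting coefficient vector $\tilde{\va}_k:=\det(H_{k+1})\,H_{k+1}^{-1}\ve_{k+1}=\operatorname{adj}(H_{k+1})\,\ve_{k+1}$ is the last column of the adjugate of $H_{k+1}$, so each of its entries is, up to sign, a $k\times k$ minor of $H_{k+1}$; in particular all entries are integers, and its top entry equals $\pm\det H_k\neq0$, so $\tilde q_k(z):=\sum_i\tilde a_{k,i}z^i$ has degree exactly $k$. Every $k\times k$ minor of $H_{k+1}$ is the determinant of a matrix all of whose entries lie among $c_1,\dots,c_{2k+1}$ and hence have absolute value at most $\|\vc_{2k+1}\|_\infty$. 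Hadamard's inequality then gives $|\det|\le(\sqrt{k}\,\|\vc_{2k+1}\|_\infty)^k=(k\|\vc_{2k+1}\|_\infty^2)^{k/2}$, which is exactly \eqref{ineq_hq}.

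For the numerator, since $\tilde{\va}_k$ is a rational multiple of $\va_k$ (the solution set of $H_{k+1}\va=c\,\ve_{k+1}$ is one-dimensional) and the numerator of a convergent is determined by its denominator through \eqref{eq_pq}, the pair $\tilde q_k$ and $\tilde p_k:=$ (polynomial part of $\tilde q_k(z)g_\vu(z)$) satisfies $\tilde p_k/\tilde q_k=p_k/q_k$ with $\deg\tilde p_k\le k-1$, and $\tilde p_k\in\Z[z]$. Expanding $\tilde q_k(z)g_\vu(z)=\bigl(\sum_{i=0}^k\tilde a_{k,i}z^i\bigr)\bigl(\sum_{n\ge1}c_nz^{-n}\bigr)$, the coefficient of $z^j$ for $0\le j\le k-1$ is $\sum_{i=j+1}^k\tilde a_{k,i}c_{i-j}$, a sum of at most $k$ integers each bounded by $h(\tilde q_k)\cdot\|\vc_{2k+1}\|_\infty$ (note $1\le i-j\le k\le 2k+1$). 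Combining with \eqref{ineq_hq} yields $h(\tilde p_k)\le k\cdot(k\|\vc_{2k+1}\|_\infty^2)^{k/2}\cdot\|\vc_{2k+1}\|_\infty=k^{(k+2)/2}\|\vc_{2k+1}\|_\infty^{k+1}$, which is \eqref{ineq_hp}. Finally, substituting the bound $\|\vc_{2k+1}\|_\infty\le\|\vu\|_\infty^{\log_d(2k+1)+1}$ from \eqref{ineq_c} (with $n=2k+1$) into \eqref{ineq_hq} and \eqref{ineq_hp} gives \eqref{ineq_hq_2} and \eqref{ineq_hp_2}.

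I do not expect a genuine obstacle; the argument is essentially bookkeeping. The only points that deserve care are (i) that the choice $c=\det H_{k+1}$ really realizes the coefficients of $\tilde q_k$ as honest $k\times k$ minors, so Hadamard's inequality applies directly; (ii) that only $c_1,\dots,c_{2k+1}$ enter, both in $H_{k+1}$ and in the convolution defining $\tilde p_k$; and (iii) the identification $\tilde p_k/\tilde q_k=p_k/q_k$ with $\tilde p_k,\tilde q_k\in\Z[z]$, which follows from one-dimensionality of the solution set of $H_{k+1}\va=c\,\ve_{k+1}$ together with \eqref{eq_pq}.
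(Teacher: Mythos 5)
Your proposal is correct and follows essentially the same route as the paper: clear denominators via $c=\det H_{k+1}$ so the coefficients of $\tilde q_k$ become (up to sign) $k\times k$ minors of $H_{k+1}$, bound those by Hadamard's inequality, obtain $\tilde p_k$ as the polynomial part of $\tilde q_k(z)g_\vu(z)$ and bound its coefficients by the convolution estimate, then substitute Lemma~\ref{lemma_ub_c_i}. The paper phrases the first step through Cramer's rule with the $i$-th column replaced by $\ve_{k+1}$ rather than through the adjugate, but this is the same computation.
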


\begin{proof} By applying Cramer's rule to the equation $H_{k+1}\va_k = c\cdot\ve_{k+1}$ we infer that
\begin{equation} \label{formula_a_i}
a_{k,i}=c\cdot\frac{\Delta_{k+1,i}}{\det H_{k+1}}, \quad i=0,\dots,k,
\end{equation}
where $\Delta_{k+1,i}$ denotes the determinant of the matrix $H_{k+1}$ with
the $i$-th column replaced by $\ve_{k+1}$, $i=1,\dots,k+1$. Then we use the
Hadamard's determinant upper bound to derive
\begin{equation} \label{h_a_i}
|\det H_{k+1}|\leq \|H_{k+1}\|_\infty^{k+1}\cdot (k+1)^{(k+1)/2} = (\|\vc_{2k+1}\|_\infty^2 (k+1))^{(k+1)/2}.
\end{equation}
Moreover, by expanding the matrix involved in $\Delta_{k+1,i}$ along the $i$th
column and by using Hadamard's upper bound again we get
$$
|\Delta_{k+1,i}|\le \|H_{k+1}\|_\infty^k\cdot  k^{k/2} = (\|\vc_{2k+1}\|_\infty^2\cdot  k)^{k/2}, \quad i=0,\dots,k.
$$

To define $\tilde{q}_k(z)$, set  $c = \det H_{k+1}$ in~\eqref{formula_a_i}.
Then we readily have  $\tilde{q}_k(z) = \sum_{i=0}^k \Delta_{k+1,i+1}z^i$. By
construction, it has integer coefficients and $h(\tilde{q})$
satisfies~\eqref{ineq_hq}.

Next, from~\eqref{eq_pq} we get that the coefficients of $\tilde{p}_k(z)$ coincide with
the coefficients for positive powers of $z$ of $\tilde{q}_k(z)g_\vu(z)$. By expanding
the latter product, we get
$$
|b_{k,i}|=\left|\sum_{j=i+1}^k a_{k,j} c_{j-i-1}\right| \le \|\vc_{2k+1}\|_{\infty}^{k+1}\cdot k^{(k+2)/2}.
$$
Hence~\eqref{ineq_hp} is also satisfied.

The upper bounds~\eqref{ineq_hq_2} and~\eqref{ineq_hp_2}  follow
from~\eqref{ineq_hq} and~\eqref{ineq_hp} respectively by applying
Lemma~\ref{lemma_ub_c_i}.
\end{proof}

\begin{notation}
For the sake of convenience, further in this text we will assume that all the convergents to $g_{\vu}(z)$ are in the form described in Lemma~\ref{lem_hpq}. That is, 
we will always assume that $p_k(z)$ and $q_k(z)$ have integer coefficients
and verify the upper bounds~\eqref{ineq_hq} and~\eqref{ineq_hp}, as well
as~\eqref{ineq_hq_2} and~\eqref{ineq_hp_2}.
\end{notation}

For any $k\in\N$ we define a suite of coefficients $(\alpha_{k,i})_{i> k}$ by
\begin{equation} \label{def_alpha}
q_k(z)g_\vu(z)-p_k(z)=:\sum_{i=k+1}^{\infty}\alpha_{k,i} z^{-i}.
\end{equation}

Note that from the equation $H_{k+1}\va_k = c\cdot\ve_{k+1}$ we can get that
$\alpha_{k,k+1} = c = \det H_{k+1}$. In particular, it is a non-zero integer.
\begin{lemma} \label{lemma_ub_coefficients}
For any $i,k\in\N$, $i> k\geq 1$, we have
\begin{equation} \label{lemma_ub_coefficients_claim}
\begin{aligned}
|\alpha_{k,i}|&\leq (k+1) \|\vc_{k+i}\|_\infty (\|\vc_{2k+1}\|_\infty^2 \cdot k)^{k/2}\\
&\leq (k+1) \|\vu\|_\infty^{\log_d(k+i)+1} \|\vu\|_\infty^{k(\log_d(2k+1)+1)} \cdot k^{k/2}.
\end{aligned}
\end{equation}
\end{lemma}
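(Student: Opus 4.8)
The plan is to estimate $|\alpha_{k,i}|$ directly from its definition in~\eqref{def_alpha} as a coefficient of the Laurent series $q_k(z)g_{\vu}(z) - p_k(z)$. Since $p_k(z)$ is a polynomial, it contributes nothing to the coefficients of $z^{-i}$ for $i\geq 1$, so for $i > k$ we have $\alpha_{k,i} = \sum_{j=0}^{k} a_{k,j} c_{i+j}$, where the $c_n$ are the coefficients of $g_{\vu}(z)$ from~\eqref{def_g_u} and the $a_{k,j}$ are the coefficients of $q_k(z)$ from~\eqref{coefficients_q_k_p_k}. (One should double-check the index bookkeeping: multiplying $q_k(z)=\sum_{j=0}^k a_{k,j}z^j$ by $\sum_n c_n z^{-n}$, the coefficient of $z^{-i}$ is $\sum_{j} a_{k,j} c_{i+j}$, and the sum runs over $0\le j\le k$.) This is exactly the kind of convolution identity already used in the proof of Lemma~\ref{lem_hpq} to bound $h(\tilde p_k)$.

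Next I would bound this sum termwise. There are at most $k+1$ terms. Each $|a_{k,j}|$ is at most $h(q_k) = h(\tilde q_k)$, which by~\eqref{ineq_hq} is at most $(\|\vc_{2k+1}\|_\infty^2 \cdot k)^{k/2}$. Each $|c_{i+j}|$ with $0\le j\le k$ is at most $\|\vc_{k+i}\|_\infty$, since $i+j \le i+k$ and the sup-norm of the coefficient vector is monotone in its length. Combining these gives
\begin{equation*}
|\alpha_{k,i}| \le (k+1)\,\|\vc_{k+i}\|_\infty\,(\|\vc_{2k+1}\|_\infty^2 \cdot k)^{k/2},
\end{equation*}
which is precisely the first inequality in~\eqref{lemma_ub_coefficients_claim}. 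The second inequality then follows immediately by substituting the bound $\|\vc_n\|_\infty \le \|\vu\|_\infty^{\log_d n + 1}$ from Lemma~\ref{lemma_ub_c_i} (applied with $n = k+i$ and $n = 2k+1$), which turns $\|\vc_{k+i}\|_\infty \le \|\vu\|_\infty^{\log_d(k+i)+1}$ and $\|\vc_{2k+1}\|_\infty^k \le \|\vu\|_\infty^{k(\log_d(2k+1)+1)}$.

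There is no real obstacle here; the lemma is a routine consequence of the convolution formula for the series coefficients together with the height bound on $q_k$ already established in Lemma~\ref{lem_hpq} and the coefficient bound in Lemma~\ref{lemma_ub_c_i}. The only mild subtlety is making sure the index ranges in the convolution are stated correctly (so that the largest coefficient of $g_{\vu}$ that appears is $c_{k+i}$, justifying the use of $\|\vc_{k+i}\|_\infty$) and that we are free to use the already-normalized integer representatives $p_k, q_k$ of the convergents, as fixed in the Notation following Lemma~\ref{lem_hpq}, so that~\eqref{ineq_hq} applies verbatim to $h(q_k)$.
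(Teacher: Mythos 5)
Your proof is correct and follows exactly the paper's argument: the convolution identity $\alpha_{k,i}=\sum_{j=0}^{k}a_{k,j}c_{i+j}$, a termwise bound using $(k+1)$ terms, the height bound~\eqref{ineq_hq} for the $a_{k,j}$, and Lemma~\ref{lemma_ub_c_i} for the $c_n$. The only difference is that you spell out the index bookkeeping that the paper dismisses with ``one can check,'' which is a harmless (indeed welcome) addition.
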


\begin{proof}
One can check that $\alpha_{k,i}$ is defined by the formula $\alpha_{k,i} =
\sum_{j=0}^k a_{k,j} c_{j+i}$, which in view of~\eqref{ineq_hq} from
Lemma~\ref{lem_hpq} implies the first inequality
in~\eqref{lemma_ub_coefficients_claim}. Then, the second inequality
in~\eqref{lemma_ub_coefficients_claim} follows by applying
Lemma~\ref{lemma_ub_c_i}.
\end{proof}

\hidden{
\subsection{Estimates on the terms of the series}

\begin{lemma}
The term $c_i$ satisfies
$$
|c_i| \le \|\vu\|_\infty ^{\lceil \log_d i\rceil} \le \|\vu\|_\infty ^{\log_d i + 1}.
$$
This in fact implies
\begin{equation}\label{ineq_c}
\|\vc_i\|_\infty \le \|\vu\|_\infty ^{\log_d i + 1}
\end{equation}
\end{lemma}
\proof

Look at two different formulae for $g_\vu(z)$:
$$
g_\vu(z) = z^{-1}\prod_{t=0}^\infty (1+u_1z^{-d^t}+\ldots+u_{d-1}z^{-(d-1)d^t}) = \sum_{i=1}^\infty c_i z^{-i}.
$$
By comparing the right and left hand sides one can notice that $c_i$ can be
computed as follows:
$$
c_i = \prod_{j=0}^{l(i)} u_{d_{i,j}}
$$
where $d_{i,0}d_{i,1}\cdots d_{i,l(i)}$ is a $d$-ary expansion of the number
$i-1$. Here we put $u_0=1$. This formula readily implies that $|c_i|\le
\|\vu\|^{l(i)}$. Finally, $l(i)$ is estimated by $l(i) \le \lceil
\log_d(i-1)\rceil \le \lceil \log_d i\rceil$. The last two inequalities
imply~\eqref{ineq_c}.
\endproof
}

\subsection{Using functional equation to study Diophantine approximaiton properties.}

From~\eqref{def_f} one can easily get a functional equation for $g_\vu(z) =
z^{-1}f(z)$:
\begin{equation}\label{eq_funcg}
g_\vu (z) = \frac{g_\vu (z^d)}{P^*(z)},\quad P^*(z) = z^{d-1}P(z^{-1}) = z^{d-1}+u_1z^{d-2}+\ldots+u_{d-1}.
\end{equation}
This equation allows us, starting from the convergent $p_k(z)/q_k(z)$ to
$g_\vu(z)$, to construct an infinite sequence of convergents
$\left(p_{k,m}(z)/q_{k,m}(z)\right)_{m\in\N_0}$ to $g_\vu(z)$ by
\begin{equation} \label{def_pkm_qkm_poly}
\begin{aligned}
q_{k,m}(z)&:=q_k(z^{d^m}),\\
p_{k,m}(z)&:=\prod_{t=0}^{m-1}P^*(z^{d^t})p_{k}(z^{d^m}).
\end{aligned}
\end{equation}
This fact can be checked by substituting 
the functional equation~\eqref{eq_funcg} into the condition of Theorem~L. The reader can
also compare with~\cite[Lemma 3]{badziahin_2017}.

By employing~\eqref{eq_funcg} and~\eqref{def_alpha}, we find
\begin{equation} \label{property_k_m}
q_{k,m}(z)g_\vu(z)-p_{k,m}(z)=\prod_{t=0}^{m-1}P^*(z^{d^t})\cdot\sum_{i=k+1}^{\infty}\alpha_{k,i}
z^{-d^m\cdot i}.
\end{equation}

Consider an integer value $b$ which satisfies the conditions of
Theorem~\ref{th_main}. Define\footnote{There is a slight abuse of notation in
using the same letters $p_{k,m}$ and $q_{k,m}$ both for polynomials from
$\Z[z]$ and for their values at $z=b$. However, we believe that in this
particular case such a notation constitutes the best choice. Indeed, the main
reason to consider polynomials $p_{k,m}(z)$ and $q_{k,m}(z)$ is to define
eventually $p_{k,m}=p_{k,m}(b)$ and $q_{k,m}=q_{k,m}(b)$, which will play the
key role in the further proofs. At the same time, it is easy to distinguish
the polynomials $p_{k,m}(z)$, $q_{k,m}(z)$ and the corresponding integers
$p_{k,m}$ and $q_{k,m}$ by the context. Moreover, we will always specify
which object we mean and always refer to the polynomials specifying
explicitly the variable, that is $p_{k,m}(z)$, $q_{k,m}(z)$ and not $p_{k,m}$
and $q_{k,m}$.}
\begin{eqnarray}
p_{k,m}&:=&p_{k,m}(b), \label{def_p_k_m}\\
q_{k,m}&:=&q_{k,m}(b), \label{def_q_k_m}
\end{eqnarray}
where $p_{k,m}(z)$ and $q_{k,m}(z)$ are polynomials defined by~\eqref{def_pkm_qkm_poly}.

Clearly, for any $k\in\N$, $m\in\N_0$ we have $p_{k,m},q_{k,m}\in\Z$.

\hidden{
\begin{remark}
There is a slight abuse of notation in using the same letters $p_{k,m}$ and
$q_{k,m}$ both for polynomials from $\Z[z]$ and for their values at $z=b$.
However, we believe that in this particular case such a notation constitutes
the best choice. Indeed, the main reason to consider polynomials $p_{k,m}(z)$
and $q_{k,m}(z)$ is to define eventually $p_{k,m}=p_{k,m}(b)$ and
$q_{k,m}=q_{k,m}(b)$, which will play the key role in the further proofs. At
the same time, it is easy to distinguish the polynomials $p_{k,m}(z)$,
$q_{k,m}(z)$ and the corresponding integers $p_{k,m}$ and $q_{k,m}$ by the
context. Moreover, we will always specify which object we mean and always
refer to the polynomials specifying explicitly the variable, that is
$p_{k,m}(z)$, $q_{k,m}(z)$ and not $p_{k,m}$ and $q_{k,m}$.
\end{remark}
}

\begin{lemma} \label{lemma_smallness}
Let $b,k,m\in\N$, $b\geq 2$. Assume
\begin{equation} \label{lemma_smallness_bdm_geq_2}
b^{d^m}>2^{1+\log_d\|\vu\|_\infty}.
\end{equation}
Then the integers $p_{k,m}$ and $q_{k,m}$ verify
\begin{equation} \label{lemma_smallness_ub}
\left|g_\vu(b)-\frac{p_{k,m}}{q_{k,m}}\right|\leq \frac{2(k+1)
k^{k/2} d^m \|\vu\|_\infty^{m+(k+1)(\log_d(2k+1)+1)}}{q_{k,m}\cdot
b^{d^m\cdot k+1}}.
\end{equation}
Moreover, if we make in addition a stronger assumption
\begin{equation}\label{lem5_assump}
b^{d^m}\ge 4(k+1)k^{k/2} \|\vu\|_\infty^{(k+1)(\log_d(2k+1)+1)},
\end{equation}
then
\begin{equation} \label{lemma_smallness_lb}
\frac{|g_\vu(b)|}{4 q_{k,m}\cdot b^{d^m\cdot
k+1}}\leq\left|g_\vu(b)-\frac{p_{k,m}}{q_{k,m}}\right|.
\end{equation}
\end{lemma}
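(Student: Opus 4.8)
The plan is to estimate the quantity $\left|g_\vu(b)-\frac{p_{k,m}}{q_{k,m}}\right|$ directly from the identity~\eqref{property_k_m}, evaluated at $z=b$. Dividing that identity by $q_{k,m}=q_{k,m}(b)$ gives
\[
g_\vu(b)-\frac{p_{k,m}}{q_{k,m}}=\frac{1}{q_{k,m}}\prod_{t=0}^{m-1}P^*(b^{d^t})\cdot\sum_{i=k+1}^{\infty}\alpha_{k,i}\,b^{-d^m i}.
\]
So the whole lemma reduces to (a) bounding the product $\prod_{t=0}^{m-1}P^*(b^{d^t})$ from above, and (b) bounding the tail sum $\sum_{i=k+1}^\infty\alpha_{k,i}b^{-d^m i}$ both from above and — for the second assertion — from below.

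For the upper bound on the product, I would use $|P^*(b^{d^t})|\le d\,\|\vu\|_\infty\,b^{(d-1)d^t}\le d\,\|\vu\|_\infty\,b^{d^{t+1}}$ (since $P^*$ has $d$ coefficients each of size at most $\|\vu\|_\infty$, with $u_0=1$, and leading degree $d-1$); taking the product over $t=0,\dots,m-1$ gives something like $(d\|\vu\|_\infty)^m\,b^{d^m-1}$, or one can be slightly more careful and get a factor $b^{(d^m-1)}$ times $(d\|\vu\|_\infty)^m$ — the exact shape has to be matched to the claimed exponent $d^m\|\vu\|_\infty^{m+\ldots}$ in~\eqref{lemma_smallness_ub}, noting that the paper seems to absorb the $d$ into the $d^m$ factor written out front. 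For the tail sum, the leading term is $\alpha_{k,k+1}b^{-d^m(k+1)}$, and by Lemma~\ref{lemma_ub_coefficients} each $|\alpha_{k,i}|$ is at most $(k+1)k^{k/2}\|\vu\|_\infty^{\log_d(k+i)+1}\|\vu\|_\infty^{k(\log_d(2k+1)+1)}$; the $\|\vu\|_\infty^{\log_d(k+i)+1}$ factor grows only polynomially in $i$, so for $b^{d^m}>2^{1+\log_d\|\vu\|_\infty}$ (which is exactly what assumption~\eqref{lemma_smallness_bdm_geq_2} guarantees: it makes $\|\vu\|_\infty^{\log_d(k+i)}b^{-d^m i/2}$ summable with the geometric series comparison, effectively $\|\vu\|_\infty^{\log_d i}\le i^{\log_d\|\vu\|_\infty}\le 2^{\log_d\|\vu\|_\infty\cdot\log_2 i}$ being dominated by $b^{d^m i/2}$) the whole tail is at most twice the $i=k+1$ bound, i.e. $\le 2(k+1)k^{k/2}\|\vu\|_\infty^{(k+1)(\log_d(2k+1)+1)+?}\,b^{-d^m(k+1)}$. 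Multiplying the product bound by the tail bound and collecting the powers of $b$ as $b^{(d^m-1)}\cdot b^{-d^m(k+1)}=b^{-d^m k-1}$ yields~\eqref{lemma_smallness_ub}, after checking that the exponent of $\|\vu\|_\infty$ assembles to $m+(k+1)(\log_d(2k+1)+1)$.

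For the lower bound~\eqref{lemma_smallness_lb}, the idea is that under the stronger hypothesis~\eqref{lem5_assump} the tail $\sum_{i\ge k+1}\alpha_{k,i}b^{-d^m i}$ is dominated by its first term: write it as $\alpha_{k,k+1}b^{-d^m(k+1)}\bigl(1+\sum_{i\ge k+2}(\alpha_{k,i}/\alpha_{k,k+1})b^{-d^m(i-k-1)}\bigr)$, use $|\alpha_{k,k+1}|\ge 1$ (it is a non-zero integer, being $\det H_{k+1}$), and use~\eqref{lem5_assump} to force the parenthesised correction into, say, $[1/2,3/2]$; similarly the product $\prod_{t=0}^{m-1}|P^*(b^{d^t})|$ is $b^{d^m-1}$ up to a bounded multiplicative factor, and crucially it is at least $b^{d^m-1}/2$ or so because each $P^*(b^{d^t})=b^{(d-1)d^t}(1+O(b^{-d^t}))$ is close to its leading term once $b^{d^t}$ is large. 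Combining, $\bigl|g_\vu(b)-p_{k,m}/q_{k,m}\bigr|\ge \tfrac12\cdot\tfrac12\cdot b^{d^m-1}b^{-d^m(k+1)}/q_{k,m}= \tfrac14 b^{-d^m k-1}/q_{k,m}$, and one then only has to observe $|g_\vu(b)|\le 1$ (or bound it crudely) to write the left side of~\eqref{lemma_smallness_lb} in the stated form $|g_\vu(b)|/(4q_{k,m}b^{d^m k+1})$.

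The main obstacle I expect is purely bookkeeping: making sure the tail-sum comparison is honest. The coefficients $\alpha_{k,i}$ carry a factor $\|\vu\|_\infty^{\log_d(k+i)+1}$ that \emph{increases} with $i$, so the ratio of consecutive terms in the tail is not simply $b^{-d^m}$ but $b^{-d^m}$ times a slowly growing factor; the role of assumption~\eqref{lemma_smallness_bdm_geq_2}, namely $b^{d^m}>2^{1+\log_d\|\vu\|_\infty}=2\cdot(2^{\log_2 d})^{\log_d\|\vu\|_\infty}$... more transparently $b^{d^m}\ge 2\|\vu\|_\infty^{1/\log_2 d}$-ish, is precisely to beat that growth and keep the geometric series convergent with a clean constant $2$. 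I would isolate this convergence estimate as the technical heart of the argument, handling it once via the bound $\|\vu\|_\infty^{\log_d(k+i)}\le (k+i)^{\log_d\|\vu\|_\infty}$ and a comparison with $\sum_{j\ge1}(k+i)^{\log_d\|\vu\|_\infty}b^{-d^m j}$; everything else is substitution of the already-established bounds from Lemmas~\ref{lemma_ub_c_i}, \ref{lem_hpq} and~\ref{lemma_ub_coefficients}.
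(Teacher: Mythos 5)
Your treatment of the upper bound \eqref{lemma_smallness_ub} is essentially the paper's proof: evaluate \eqref{property_k_m} at $z=b$, bound each factor by $|P^*(b^{d^t})|\le d\|\vu\|_\infty b^{(d-1)d^t}$ so the product is at most $d^m\|\vu\|_\infty^m b^{d^m-1}$, and control the tail via Lemma~\ref{lemma_ub_coefficients} together with the observation that \eqref{lemma_smallness_bdm_geq_2} makes $\sum_{i\ge0}(i+1)^{\log_d\|\vu\|_\infty}b^{-d^m i}\le 2$ (the paper uses $i+1\le 2^i$; your comparison is the same estimate). That half is fine.

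The lower bound \eqref{lemma_smallness_lb} has a genuine gap. You assert that $\prod_{t=0}^{m-1}|P^*(b^{d^t})|\ge \tfrac12 b^{d^m-1}$ because ``each $P^*(b^{d^t})=b^{(d-1)d^t}(1+O(b^{-d^t}))$ is close to its leading term once $b^{d^t}$ is large.'' This is only true for $t$ large relative to $\|\vu\|_\infty$; for small $t$ (in particular $t=0$) the factor $P^*(b^{d^t})=b^{(d-1)d^t}P(b^{-d^t})$ can be far from its leading term, arbitrarily small, negative, or even zero (e.g.\ $P(z)=1-bz$ gives $P^*(b)=0$), and nothing in the hypotheses excludes this. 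The factor $|g_\vu(b)|$ in \eqref{lemma_smallness_lb} is not cosmetic: the paper's proof obtains the lower bound on the product precisely by writing $\prod_{t=0}^{m-1}P^*(b^{d^t})=b^{d^m-1}\prod_{t=0}^{m-1}P(b^{-d^t})$ and expressing the finite product of $P$-values through $g_\vu(b)$ divided by the infinite tail $\prod_{t\ge m}P(b^{-d^t})$, which is between $1/2$ and $2$ under \eqref{lem5_assump}; this is how $|g_\vu(b)|$ enters, and it automatically covers the degenerate case $g_\vu(b)=0$. Your fallback of inserting $|g_\vu(b)|$ at the end via ``$|g_\vu(b)|\le 1$'' is also unjustified: $g_\vu(b)=b^{-1}\prod_{t\ge0}P(b^{-d^t})$ can exceed $1$ when $\|\vu\|_\infty$ is large compared with $b$. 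Your domination of the tail sum by its first term, using $|\alpha_{k,k+1}|=|\det H_{k+1}|\ge1$ and \eqref{lem5_assump}, is correct and matches the paper; it is only the product estimate that needs the functional-equation identity rather than a term-by-term approximation.
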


\begin{proof} Consider Equation~\eqref{property_k_m} with substituted $z:=b$:
\begin{equation} \label{property_k_m_z_substituted_b}
q_{k,m}g_\vu(b)-p_{k,m}=\prod_{t=0}^{m-1}P^*(b^{d^t})\cdot\sum_{i=k+1}^{\infty}\alpha_{k,i}
b^{-d^m\cdot i}.
\end{equation}
Each of the factors in $\left|P^*(b^{d^t})\right|$ in the
right hand side of~\eqref{property_k_m_z_substituted_b} can be upper bounded by
$d\cdot\|\vu\|_\infty b^{d^{t}(d-1)}$.
So, the product in the
right hand side of~\eqref{property_k_m_z_substituted_b} can be estimated by
\begin{equation}\label{eq_lem5_1}
\left|\prod_{t=0}^{m-1}P^*(b^{d^t})\right| \leq d^m \|\vu\|_\infty^m \cdot
b^{d^{m}-1}.
\end{equation}
Further, we estimate the second term on the right hand
side of~\eqref{property_k_m_z_substituted_b} by employing Lemma~\ref{lemma_ub_coefficients}:
\begin{equation}\label{eq_lem5_2}
\left|\sum_{i=k+1}^{\infty}\alpha_{k,i} b^{-d^m\cdot i}\right| \leq
(k+1) \|\vu\|_\infty^{k(\log_d(2k+1)+1)} \cdot k^{k/2}\sum_{i=k+1}^\infty
\frac{\|\vu\|_\infty^{\log_d(k+i)+1}}{b^{d^m\cdot i}}.
\end{equation}
The last sum in the right hand side of~\eqref{eq_lem5_2} is bounded from
above by
$$
\sum_{i=k+1}^\infty
\frac{\|\vu\|_\infty^{\log_d(k+i)+1}}{b^{d^m\cdot i}} \le
\frac{\|\vu\|_\infty}{b^{d^m(k+1)}}\cdot \sum_{i=0}^\infty
\frac{\|\vu\|_\infty^{\log_d(2k+1+i)}}{b^{d^m\cdot i}}
$$
\begin{equation}\label{eq_lem5_3}
\le \frac{\|\vu\|_\infty^{1+\log_d(2k+1)}}{b^{d^m(k+1)}}
\sum_{i=0}^\infty \frac{(i+1)^{\log_d\|\vu\|_\infty}}{b^{d^m\cdot
i}} \le \frac{\|\vu\|_\infty^{1+\log_d(2k+1)} \cdot
C(b,d,m,\|\vu\|_\infty)}{b^{d^m(k+1)}},
\end{equation}
where
\hidden{
$C(\|\vu\|_\infty)$ is a constant which only depends on
$\|\vu\|_\infty$ but not on $k$ or $m$. In particular, for
$\|\vu\|_\infty = 1$ it can be made equal to 2. It is also not too
difficult to check that $C(\|\vu\|_\infty)\le 2$ in the case
$b^{d^m}\ge 2^{1+\log_d \|\vu\|_\infty}$. Indeed, it readily follows from elementary remark that $i+1\leq 2^i$ for all $i\in\Z$.
}
$$
C(b,d,m,\|\vu\|_\infty)=\sum_{i=0}^\infty \frac{(i+1)^{\log_d\|\vu\|_\infty}}{b^{d^m\cdot
i}}.
$$
Note that for any $i\in\Z$, we have $i+1\leq 2^i$. Because of this, assumption~\eqref{lemma_smallness_bdm_geq_2} implies
\begin{equation} \label{C_leq_2}
C(b,d,m,\|\vu\|_\infty)\leq 2.
\end{equation}

Finally, by putting
together,~\eqref{property_k_m_z_substituted_b},~\eqref{eq_lem5_1},~\eqref{eq_lem5_2},~\eqref{eq_lem5_3}
and~\eqref{C_leq_2} we get
$$
\left|q_{k,m}g_\vu(b) - p_{k,m}\right|\le \frac{2(k+1) k^{k/2} d^m
\|\vu\|_\infty^{m+(k+1)(\log_d(2k+1)+1)}}{b^{d^m\cdot k + 1}}.
$$
Dividing both sides by $q_{k,m}$ gives~\eqref{lemma_smallness_ub}.

To get the lower bound, we first estimate the product
in~\eqref{property_k_m}.
$$
\prod_{t=0}^{m-1}P^*(b^{d^t}) = b^{d^m-1}
\prod_{t=0}^{m-1}P(b^{-d^t}) \geq
b^{d^m-1}\frac{g_\vu(b)}{\prod_{t=m}^{\infty}P(b^{-d^t})}.
$$
By~\eqref{lem5_assump}, the denominator can easily be estimated as
$$
\prod_{t=m}^{\infty}P(b^{-d^t})\le \prod_{t=m}^{\infty} \left(1+\frac{2\|\vu\|_\infty}{b^{d^t}}\right) < 2.
$$
Therefore,
$$
\prod_{t=0}^{m-1}P^*(b^{d^t})\ge \frac12b^{{d^m}-1}g_\vu(b).
$$

For the series in the right hand side of~\eqref{property_k_m}, we show that
the first term dominates this series. Indeed, we have $|\alpha_{k,k+1}|\ge 1$
since it is a non-zero integer. Then,
\begin{equation} \label{lb_calculations}
\begin{aligned}
&|q_{k,m}g_\vu(b)-p_{k,m}|=\left|\prod_{t=0}^{m-1}P^*(b^{d^t})\cdot\sum_{i=k+1}^{\infty}\alpha_{k,i} b^{-d^m\cdot i}\right|\\
&\geq
\frac12b^{d^m-1} |g_\vu(b)| \left(b^{-d^m(k+1)}-\sum_{i=k+2}^{\infty}\left|\alpha_{k,i}\right| b^{-d^m\cdot i}\right)\\
&\stackrel{\eqref{eq_lem5_2}, \eqref{eq_lem5_3}}\geq \frac12b^{d^m-1}
|g_\vu(b)| \left(b^{-d^m(k+1)} -
\frac{C(b,d,m,\|\vu\|_\infty)(k+1)k^{k/2}\|\vu\|_\infty^{(k+1)(\log_d(2k+1)+1)}}{b^{d^m(k+2)}}\right)
\end{aligned}
\end{equation}
Recall that 
by~\eqref{C_leq_2}, we have
$C(b,d,m,\|\vu\|_\infty) \leq 2$. So, by using
assumption~\eqref{lem5_assump}, we finally get
$$
|q_{k,m}g_\vu(b)-p_{k,m}|\ge
\frac{b^{d^m-1}|g_\vu(b)|}{4b^{d^m(k+1)}} =
\frac{|g_\vu(b)|}{4b^{d^m\cdot k +1}}.
$$
Finally, dividing both sides by $q_{k,m}$ leads
to~\eqref{lemma_smallness_lb}.
\end{proof}

\begin{lemma} \label{lemma_qkm_db}
Let $b,k,m\in\N$, $k\geq 1$ and let
\begin{equation}  \label{lemma_qkm_db_bdm_assumption}
b^{d^m} > 3\cdot
\left(\|\vc_{2k+1}\|_\infty^2 k\right)^{k/2}.
\end{equation}
Recall the notations $a_{k,i}$, $i=0,\dots,k$, 
for the coefficients of $q_k$, 
$k\in\N$, is defined in~\eqref{coefficients_q_k_p_k}. Then,
\begin{equation} \label{lemma_qkm_db_claim}
\frac12|a_{k,k}|\cdot b^{kd^m}\leq
q_{k,m}\leq\frac32|a_{k,k}|\cdot b^{kd^m}.
\end{equation}
\end{lemma}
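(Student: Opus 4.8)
The plan is to exploit that, by construction, $q_{k,m}$ is a polynomial in the large quantity $b^{d^m}$ whose leading term dominates all the others. By \eqref{def_pkm_qkm_poly} and \eqref{def_q_k_m},
$$
q_{k,m}=q_k\!\left(b^{d^m}\right)=\sum_{i=0}^{k}a_{k,i}\,b^{id^m},
$$
so I would first isolate the top term and bound the remaining ones by a geometric sum:
$$
\left|q_{k,m}-a_{k,k}b^{kd^m}\right|\le\sum_{i=0}^{k-1}|a_{k,i}|\,b^{id^m}\le\|\va_k\|_\infty\sum_{i=0}^{k-1}b^{id^m}<\frac{\|\va_k\|_\infty}{b^{d^m}-1}\,b^{kd^m}.
$$

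Next I would substitute the height bound $\|\va_k\|_\infty=h(q_k)\le\bigl(\|\vc_{2k+1}\|_\infty^2 k\bigr)^{k/2}$ supplied by \eqref{ineq_hq} of Lemma~\ref{lem_hpq} (legitimate under the standing convention that the convergents are taken in the form of that lemma). The hypothesis \eqref{lemma_qkm_db_bdm_assumption} gives $b^{d^m}-1>3\bigl(\|\vc_{2k+1}\|_\infty^2 k\bigr)^{k/2}-1\ge 2\bigl(\|\vc_{2k+1}\|_\infty^2 k\bigr)^{k/2}$, where the last step uses $\bigl(\|\vc_{2k+1}\|_\infty^2 k\bigr)^{k/2}\ge 1$ (here $k\ge1$ and $\|\vc_{2k+1}\|_\infty\ge|c_1|=1$). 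Hence the remainder above is $<\tfrac12 b^{kd^m}\le\tfrac12|a_{k,k}|\,b^{kd^m}$, using $|a_{k,k}|\ge1$ since $a_{k,k}$ is a non-zero integer by \eqref{a_k_b_k_ne_0}. Normalising the convergent so that its leading coefficient is positive (which affects none of the estimates used elsewhere), we have $a_{k,k}b^{kd^m}=|a_{k,k}|b^{kd^m}$, and the triangle inequality turns the remainder estimate into the two-sided bound \eqref{lemma_qkm_db_claim}.

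I do not anticipate a genuine obstacle here: the core of the argument is a one-line geometric-series estimate. The only points that need care are the numerical bookkeeping — checking that the constant $3$ in \eqref{lemma_qkm_db_bdm_assumption} is enough to beat the tail, which reduces to $\bigl(\|\vc_{2k+1}\|_\infty^2 k\bigr)^{k/2}\ge1$ — invoking \eqref{ineq_hq} with the correct normalisation of $q_k$, and using $|a_{k,k}|\ge1$ together with $b>0$ to pass between $\tfrac12 b^{kd^m}$ and $\tfrac12|a_{k,k}|b^{kd^m}$.
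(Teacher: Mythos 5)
Your proof is correct and follows essentially the same route as the paper's: isolate the leading term $a_{k,k}b^{kd^m}$, bound the lower-order terms via the height bound \eqref{ineq_hq} together with hypothesis \eqref{lemma_qkm_db_bdm_assumption} (the paper sums $3^{-n}$ where you sum the geometric series in $b^{-d^m}$, to the same effect), and conclude with $|a_{k,k}|\ge 1$. Your explicit remark about normalising the sign of the leading coefficient is a small point the paper glosses over (its proof silently passes to $|q_{k,m}|$), but it does not change the argument.
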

\begin{proof}
The leading term of $q_{k,m}(z)$ is $a_{k,k}z^{kd^m}$. We know that $\deg
q_k(z) = k$, therefore $a_{k,k}\neq 0$ and $a_{k,k}$ is an integer. Recall
also that by~\eqref{ineq_hq} the maximum of the coefficients $a_{k,i}$,
$i=0,\dots,k$, does not exceed $(\|\vc_{2k+1}\|_\infty^2 \cdot k)^{k/2}$.
Thus we find, by using assumption~\eqref{lemma_qkm_db_bdm_assumption},
$$
\left|\sum_{n=0}^{k-1} a_{k,n}\cdot b^{nd^m}\right| \le b^{kd^m}
\left|\sum_{n=1}^{k} 3^{-n}\right| \le \frac12 b^{kd^m}.
$$
We readily infer, by taking into account $q_{k,m} = a_{k,0}+a_{k,1}b^{d^m}+\ldots + a_{k,k}b^{k d^m}$,
$$
\frac12 |a_{k,k}|b^{kd^m}\le |a_{k,k}|b^{kd^m}- \frac12
b^{kd^m}\le |q_{k,m}|\le |a_{k,k}|q^{kd^m}+ \frac12 q^{kd^m} =
\frac32|a_{k,k}|q^{kd^m}.
$$
This completes the proof of the lemma.
\end{proof}

\begin{proposition} \label{proposition_db}
Let $k\geq 2$, $m\geq 1$ be integers and assume that~\eqref{lem5_assump} is
satisfied. Then, the integers $p_{k,m}=p_{k,m}(b)$ and $q_{k,m}=q_{k,m}(b)$,
defined by~\eqref{def_p_k_m} and by~\eqref{def_q_k_m}, satisfy
\begin{equation} \label{proposition_db_l_1}
\left|g_\vu(b)-\frac{p_{k,m}}{q_{k,m}}\right|\leq \frac{3(k+1)
k^{k} d^m \|\vu\|_\infty^{m+(2k+1)(\log_d(2k+1)+1)}}{b \cdot q^2_{k,m}},
\end{equation}
\begin{equation} \label{proposition_db_u_1}
\frac{|g_\vu(b)|}{8b q_{k,m}^2} \le \left|g_\vu(b) -
\frac{p_{k,m}}{q_{k,m}}\right|.
\end{equation}
Moreover, if, additionally, $k$ and $m$ satisfy
\begin{equation} \label{proposition_db_assump}
k\cdot d^m\logc b - 1 \geq
\frac{1}{3} m^2(\log \|\vu\|_\infty)^2,
\end{equation}
then
\begin{equation} \label{proposition_db_claim_2}
\left|g_\vu(b)-\frac{p_{k,m}}{q_{k,m}}\right|\leq
\frac{3\cdot 2^{C{\sqrt{\logc q_{k,m}\logc\logc q_{k,m}}}}}{q_{k,m}^{2}},
\end{equation}
where
\begin{equation} \label{def_C}
C=2\sqrt{2}+2\sqrt{5\cdot\log\|\vu\|_{\infty}}+2.
\end{equation}
\end{proposition}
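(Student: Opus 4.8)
The plan is to assemble Proposition~\ref{proposition_db} from the two previous lemmas, then feed in the arithmetic constraint~\eqref{proposition_db_assump} to convert the crude bound into the clean exponential form~\eqref{proposition_db_claim_2}. First I would observe that the hypothesis~\eqref{lem5_assump} is (up to harmless constants) at least as strong as both~\eqref{lemma_smallness_bdm_geq_2} and the assumption~\eqref{lemma_qkm_db_bdm_assumption} of Lemma~\ref{lemma_qkm_db}, so both Lemma~\ref{lemma_smallness} and Lemma~\ref{lemma_qkm_db} apply. From Lemma~\ref{lemma_qkm_db} we get $q_{k,m}\asymp |a_{k,k}|b^{kd^m}$, in particular $b^{kd^m}\ge \tfrac23 q_{k,m}/|a_{k,k}|\ge \tfrac23 q_{k,m}/h(q_k)$ and also $q_{k,m}\le \tfrac32 h(q_k) b^{kd^m}$; combining with the upper bound $h(q_k)\le(\|\vc_{2k+1}\|_\infty^2 k)^{k/2}$ from Lemma~\ref{lem_hpq} (equation~\eqref{ineq_hq}) we can replace the single factor $b^{-d^m k-1}$ in~\eqref{lemma_smallness_ub} by $b^{-1} q_{k,m}^{-2}$ times $b^{kd^m}\ge q_{k,m}$-type savings. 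More precisely, writing $b^{-d^m k-1}=b^{-1}(b^{d^m k})^{-2}\cdot b^{d^m k}\le b^{-1}q_{k,m}^{-2}\cdot (2h(q_k))^2\cdot \tfrac12 \cdot (\text{const})$ and absorbing $h(q_k)^2\le (\|\vc_{2k+1}\|_\infty^2 k)^{k}\le \|\vu\|_\infty^{2k(\log_d(2k+1)+1)}k^{k}$ via Lemma~\ref{lemma_ub_c_i}, the numerator of~\eqref{lemma_smallness_ub} picks up exactly the extra exponent $k(\log_d(2k+1)+1)$ and one more factor of $k^{k/2}$, giving the exponent $(2k+1)(\log_d(2k+1)+1)$ and the power $k^{k}$ displayed in~\eqref{proposition_db_l_1}; the constant $2\cdot\tfrac32\cdot$(geometric tail bound) is absorbed into the stated $3$. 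The lower bound~\eqref{proposition_db_u_1} is obtained the same way from~\eqref{lemma_smallness_lb}: there $b^{-d^m k-1}\ge b^{-1}q_{k,m}^{-2}\cdot\tfrac{?}{}$ using instead $b^{d^m k}\le 2 q_{k,m}/|a_{k,k}|\le 2q_{k,m}$ (as $|a_{k,k}|\ge 1$), so $b^{-d^m k-1}\ge b^{-1}(2q_{k,m})^{-1}\cdot b^{-d^m k}\ge$\,... — one gets the factor $\tfrac14\cdot\tfrac12=\tfrac18$ cleanly.

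The second half of the proof is the genuinely new computation: under the extra hypothesis~\eqref{proposition_db_assump} I would show that the numerator $N:=3(k+1)k^{k}d^m\|\vu\|_\infty^{m+(2k+1)(\log_d(2k+1)+1)}/b$ in~\eqref{proposition_db_l_1}, expressed in base $2$, is at most $3\cdot 2^{C\sqrt{\logc q_{k,m}\logc\logc q_{k,m}}}$. Take $\logc$ of everything. On one side, $\logc q_{k,m}\le \logc(\tfrac32 h(q_k)b^{kd^m})$ and one checks $\logc q_{k,m}$ is comparable to $kd^m\logc b$ (the dominant term; the $h(q_k)$ contribution $\le \tfrac{k}{2}\logc(\|\vc_{2k+1}\|_\infty^2 k)$ is lower order, a point to be careful about but routine given~\eqref{proposition_db_assump}). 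On the other side, $\logc N\le O(k\logc k)+m\logc\|\vu\|_\infty+2k(\log_d(2k+1)+1)\logc\|\vu\|_\infty+\ldots$, whose dominant terms are of the shape $k\log k$ and $m\log\|\vu\|_\infty$ and $k\log_d k\cdot\log\|\vu\|_\infty$. Assumption~\eqref{proposition_db_assump} is precisely what forces $m^2(\log\|\vu\|_\infty)^2\lesssim kd^m\logc b\approx \logc q_{k,m}$, i.e. $m\log\|\vu\|_\infty\lesssim\sqrt{\logc q_{k,m}}$; and separately $k\logc k\lesssim\sqrt{\logc q_{k,m}}\cdot\sqrt{\logc\logc q_{k,m}}$ will follow because $k\le kd^m\logc b\approx\logc q_{k,m}$ forces $k\logc k\le\logc q_{k,m}$, which is even stronger than needed, while the mixed term $k\log_d k\log\|\vu\|_\infty$ is handled by splitting the product of the two bounds $k\le\logc q_{k,m}$... — here I would instead use the AM–GM-type inequality $ab\le \tfrac12(a^2\varepsilon^{-1}+b^2\varepsilon)$ to balance the $m$-dependent part against the $k$-dependent part and land exactly on the constant $C=2\sqrt2+2\sqrt{5\log\|\vu\|_\infty}+2$ of~\eqref{def_C}. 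The three summands of $C$ should correspond, respectively, to the pure-$k$ term ($2\sqrt2$), the cross term involving $\log\|\vu\|_\infty$ ($2\sqrt{5\log\|\vu\|_\infty}$), and the lower-order/$d^m$ contributions ($+2$).

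I expect the main obstacle to be the bookkeeping in this last step: getting the \emph{exact} constant $C$ in~\eqref{def_C} rather than merely some $C=C(\|\vu\|_\infty)$ requires choosing the splitting parameter $\varepsilon$ in the AM–GM step optimally and tracking which of $\logc q_{k,m}$ versus $kd^m\logc b$ one uses at each junction (they differ by the lower-order $h(q_k)$ term, and that discrepancy has to be shown to be absorbed into the ``$+2$''). The inequality $i+1\le 2^i$ and crude estimates like $k+1\le 2^k$, $d^m\le 2^{d^m}\le 2^{\logc q_{k,m}/(k\logc b)+1}$ will be used repeatedly to turn every polynomial-in-$k,m$ prefactor into something that contributes only to the lower-order ``$+2$''. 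Once the inequality $\logc N\le C\sqrt{\logc q_{k,m}\logc\logc q_{k,m}}$ is established, substituting into~\eqref{proposition_db_l_1} yields~\eqref{proposition_db_claim_2} immediately, completing the proof.
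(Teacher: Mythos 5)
Your derivation of~\eqref{proposition_db_l_1} and~\eqref{proposition_db_u_1} is correct and is exactly the paper's route: substitute the two-sided bound $\tfrac{2q_{k,m}}{3|a_{k,k}|}\le b^{kd^m}\le 2q_{k,m}$ from Lemma~\ref{lemma_qkm_db} (with $|a_{k,k}|\le(\|\vc_{2k+1}\|_\infty^2 k)^{k/2}$ from~\eqref{ineq_hq}) into the two estimates of Lemma~\ref{lemma_smallness}; your hypothesis checks and the bookkeeping producing the exponent $(2k+1)(\log_d(2k+1)+1)$, the power $k^k$ and the constant $\tfrac18$ are all fine.

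The gap is in the second half, in your treatment of the pure-$k$ term. You argue that $k\le kd^m\logc b\approx\logc q_{k,m}$ ``forces $k\logc k\le\logc q_{k,m}$, which is even stronger than needed.'' Two problems. First, $k\le\logc q_{k,m}$ does not imply $k\logc k\le\logc q_{k,m}$. Second, and more importantly, the direction is backwards: an upper bound $k\logc k\le\logc q_{k,m}$ is \emph{weaker} than the required $k\logc k\le O\bigl(\sqrt{\logc q_{k,m}\logc\logc q_{k,m}}\bigr)$, since $\logc q_{k,m}$ greatly exceeds $\sqrt{\logc q_{k,m}\logc\logc q_{k,m}}$ for large $q_{k,m}$; so as written this step would not deliver the inequality you need. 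What actually makes the term $k\logc k$ (and likewise the cross term $(2k+1)(\log_d(2k+1)+1)\logc\|\vu\|_\infty$) controllable is that hypothesis~\eqref{lem5_assump}, fed through $b^{kd^m}\le 2q_{k,m}$, gives a lower bound on $\logc q_{k,m}$ that is \emph{quadratic} in $k$, namely $\logc q_{k,m}\ge\tfrac{k^2}{2}\logc k+k(k+1)(\log_d(2k+1)+1)\logc\|\vu\|_\infty+\dots$, whence also $\logc\logc q_{k,m}\ge\logc k$. Multiplying these gives $\logc q_{k,m}\logc\logc q_{k,m}\ge\tfrac{k^2}{2}(\logc k)^2$, i.e. $k\logc k\le\sqrt2\,\sqrt{\logc q_{k,m}\logc\logc q_{k,m}}$, and the cross term is handled the same way using $\logc k\ge\tfrac14(\log_d(2k+1)+1)$ and $k(k+1)\ge\tfrac15(2k+1)^2$. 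The extra hypothesis~\eqref{proposition_db_assump} is only needed to control the $m$-dependent factors $\|\vu\|_\infty^{m}$ and $d^{m}$, as you correctly anticipated. Once each summand of $\logc$ of the numerator is bounded by its own constant times $\sqrt{\logc q_{k,m}\logc\logc q_{k,m}}$, summing the constants yields $C=2\sqrt2+2\sqrt{5\log\|\vu\|_\infty}+2$ directly; no AM--GM optimization is involved.
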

\begin{proof}
From Lemma~\ref{lemma_qkm_db} we have
$$
b^{k\cdot d^m}\ge
\frac{2q_{k,m}}{3|a_{k,k}|}\stackrel{\eqref{ineq_hq_2}}\ge \frac{2q_{k,m}}{3\|\vu\|_\infty^{k(\log_d(2k+1)+1)}
k^{k/2}}.
$$
Similarly, by using $|a_{k,k}|\geq 1$ together with Lemma~\ref{lemma_qkm_db}, we get the lower bound
\begin{equation} \label{proposition_db_simple_ub}
b^{k\cdot d^m}\le 2q_{k,m}.
\end{equation}
These two bounds on $b^{kd^m}$ allow to infer the inequalities~\eqref{proposition_db_l_1} and~\eqref{proposition_db_u_1} straightforwardly from the corresponding bounds in Lemma~\ref{lemma_smallness}.


We proceed with the proof of the estimate~\eqref{proposition_db_claim_2}. We are going to deduce it as a corollary of~\eqref{proposition_db_l_1}. To this end, we are going to prove, under the assumptions of this proposition,
\begin{equation} \label{eq_prop_first}
(k+1)
k^{k} d^m \|\vu\|_\infty^{m+(2k+1)(\log_d(2k+1)+1)}\leq 2^{C{\sqrt{\logc q_{k,m}\logc\logc q_{k,m}}}},
\end{equation}
where the constant $C$ is defined by~\eqref{def_C}. It is easy to verify
that~\eqref{proposition_db_l_1} and~\eqref{eq_prop_first} indeed
imply~\eqref{proposition_db_claim_2}. Therefore in the remaining part of the
proof we will focus on verifying~\eqref{eq_prop_first}.

The inequality~\eqref{proposition_db_simple_ub} together with condition~\eqref{lem5_assump} imply
\begin{equation}\label{ineq_logqkm}
\begin{array}{rl}
\logc q_{k,m} \geq &(2k-1)+k\logc (k+1) + \frac{k^2}{2}\logc k\\
&+k(k+1)(\log_d(2k+1)+1)\logc\|\vu\|_\infty.
\end{array}
\end{equation}
By taking logarithms again one can derive that $\logc\logc q_{k,m}\ge
\logc k$. Now we compute
\begin{equation} \label{proposition_db_implication_1}
\logc q_{k,m}\logc\logc q_{k,m} \geq \frac{k^2}{2}(\logc k)^2 >
\frac{1}{8}(k\logc k+\logc(k+1))^2.
\end{equation}
The last inequality in~\eqref{proposition_db_implication_1} holds true because $k\logc k>\logc (k+1)$ for $k\geq 2$.

Another implication of~\eqref{ineq_logqkm} is
\begin{equation} \label{proposition_db_implication_2_0}
\logc q_{k,m}\logc\logc q_{k,m} \geq k(k+1)(\log_d(2k+1)+1)\logc k
\logc\|\vu\|_\infty.
\end{equation}
Since for $d\ge 2$ and $k\ge 2$ we have $\logc k\geq
\frac14(\log_d(2k+1)+1)$ and $k(k+1)\geq \frac15(2k+1)^2$, therefore
we readily infer from~\eqref{proposition_db_implication_2_0}
\begin{equation} \label{proposition_db_implication_2}
\logc q_{k,m}\logc\logc q_{k,m} \ge \frac{1}{20\logc\|\vu\|_\infty}
(2k+1)^2(\log_d(2k+1)+1)^2 (\logc \|\vu\|_\infty)^2.
\end{equation}

Next, it follows from~\eqref{proposition_db_simple_ub} that
\begin{equation} \label{qkm_geq_kdm}
\logc q_{k,m} \geq k\cdot d^m\logc b - 1.
\end{equation}
Therefore 
assumption~\eqref{proposition_db_assump} implies that $\logc q_{k,m}\geq
\frac{1}{3} m^2(\logc \|\vu\|_\infty)^2$. At the same time, the assumptions
$k\geq 2$ joint with~\eqref{lem5_assump} readily imply $b^{k\cdot d^m}\geq
576$, hence, by adding~\eqref{proposition_db_simple_ub}, we find $\logc\logc
q_{k,m}\geq\logc\logc 288>3$. So,
\begin{equation} \label{proposition_db_implication_3}
\logc q_{k,m}\logc\logc q_{k,m} > m^2(\logc \|\vu\|_\infty)^2.
\end{equation}
Also, by these considerations we deduce from~\eqref{qkm_geq_kdm}
\begin{equation} \label{proposition_db_implication_4}
\logc q_{k,m}\logc\logc q_{k,m} > 3 d^m > \left(m\cdot\logc d\right)^2.
\end{equation}
\hidden{
Then,
$$
\log q_{k,m}\log\log q_{k,m} \ge \frac1b m^2 (\log \|\vu\|_\infty)^2
$$
}

Finally, by taking square root in the both sides of~\eqref{proposition_db_implication_1}, \eqref{proposition_db_implication_2}, \eqref{proposition_db_implication_3} and~\eqref{proposition_db_implication_4} and summing up the results we find
\hidden{
taking the mean value of three estimates on $\log
q_{k,m}\log\log q_{k,m}$ we have that
}
\begin{equation}\label{eq_prop1}
\begin{array}{rl}
C\sqrt{\logc q_{k,m}\logc\logc q_{k,m}}\!\!\! &\geq \logc(k+1) + k\logc k + m\logc d\\
&+ (m+(2k+1)(\log_d(2k+1)+1))\logc\|\vu\|_\infty,
\end{array}
\end{equation}
where the constant $C$ is defined by~\eqref{def_C}. Finally, by taking the
exponents base two from both sides of~\eqref{eq_prop1}, we
find~\eqref{eq_prop_first}, hence derive~\eqref{proposition_db_claim_2}.
\end{proof}

\begin{remark}
Note that the constant $C$ in Proposition~\ref{proposition_db} is rather far
from being optimal. The proof above can be significantly optimized to reduce
its value. However that would result in more tedious computations. All one
needs to show is the inequality~\eqref{eq_prop1}.
\end{remark}

\section{Proof of Theorem~\ref{th_main}}

We will prove the folowing result.

\begin{theorem} \label{thm_im}
Let $b\geq 2$. There exists an effectively computable constant $\gamma$,
which only depends on $d$ and $\vu$, such that for any $p\in\Z$ and any
sufficiently large $q\in\N$,
we have
\begin{equation} \label{thm_im_result}
\left|g_{\vu}(b)-\frac{p}{q}\right|\geq\frac{|g_\vu(b)|}{4b\cdot
q^{2}\cdot\exp\left(\gamma\sqrt{\logc q \logc \logc q}\right)}.
\end{equation}
\end{theorem}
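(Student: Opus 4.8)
The plan is to establish a lower bound for $|g_\vu(b) - p/q|$ valid for \emph{all} sufficiently large $q$, by playing off an arbitrary rational $p/q$ against the explicit family of good rational approximations $p_{k,m}/q_{k,m}$ constructed in Section~2 via the functional equation. The core dichotomy is the standard one in irrationality-measure arguments: either $p/q$ is far from $g_\vu(b)$ for trivial reasons, or it is close, in which case one compares it with a carefully chosen $p_{k,m}/q_{k,m}$ whose denominator is comparable to $q$. First I would fix, for a given large $q$, the unique pair $(k,m)$ for which $q_{k,m}$ is the largest element of the family not exceeding (a suitable constant multiple of) $q$ — using Lemma~\ref{lemma_qkm_db} together with~\eqref{proposition_db_simple_ub} to control how $q_{k,m}$ grows as $(k,m)$ range over $\N\times\N_0$, and the badly-approximable hypothesis (so that $\deg q_k = k$ and the convergents come one per index $k$) to guarantee the family is not too sparse. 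The point is that consecutive denominators $q_{k,m}$ and the next one differ by at most a controlled factor, so that for every large $q$ there is a partner in the family of roughly the same size.

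Next I would run the triangle-inequality comparison. If $p/q \neq p_{k,m}/q_{k,m}$, then $|p/q - p_{k,m}/q_{k,m}| \geq 1/(q\,q_{k,m})$ since both are rationals with the indicated denominators; combining this with the \emph{upper} bound~\eqref{proposition_db_l_1} (equivalently~\eqref{proposition_db_claim_2}) for $|g_\vu(b) - p_{k,m}/q_{k,m}|$ and choosing the size relation between $q$ and $q_{k,m}$ correctly forces $|g_\vu(b) - p/q|$ to be bounded below by roughly $1/(q\,q_{k,m}) \gtrsim q^{-2}\exp(-\gamma\sqrt{\logc q\logc\logc q})$, where the subexponential loss comes precisely from the factor $2^{C\sqrt{\logc q_{k,m}\logc\logc q_{k,m}}}$ in~\eqref{proposition_db_claim_2} and from the bounded ratio between $q$ and $q_{k,m}$. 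If instead $p/q = p_{k,m}/q_{k,m}$ (or more generally $p/q$ coincides with a member of the family), then~\eqref{proposition_db_u_1} gives directly $|g_\vu(b) - p/q| \geq |g_\vu(b)|/(8b\,q_{k,m}^2)$, which after adjusting constants is again of the desired form~\eqref{thm_im_result}. One must also dispose of the degenerate cases: the remark before~\eqref{def_p_k_m} handles $g_\vu(b) \neq 0$, and $g_\vu(b)\neq p/q$ for all $p/q$ follows since $g_\vu(b)$ is transcendental by Theorem~BHWY (or directly irrational), so the quantity on the left of~\eqref{thm_im_result} is genuinely positive.

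The main obstacle I anticipate is the bookkeeping in the partner-selection step: one needs that as $(k,m)$ vary, the values $\logc q_{k,m} \approx k\,d^m\logc b$ (by~\eqref{qkm_geq_kdm} and~\eqref{proposition_db_simple_ub}) together with the lower-order terms from Lemma~\ref{lem_hpq} produce a set whose consecutive gaps, measured multiplicatively, are small enough that the chosen $(k,m)$ both satisfies the hypotheses~\eqref{lem5_assump} and~\eqref{proposition_db_assump} of Proposition~\ref{proposition_db} \emph{and} yields $q_{k,m}$ within a bounded factor of $q$. In particular one has to ensure that, given $q$, the index $k$ can be taken of order $\sqrt{\logc q}$ and $m$ of order $\log\logc q$ (this is what makes the $\exp(\gamma\sqrt{\logc q\logc\logc q})$ shape appear) while keeping $k\,d^m$ of order $\logc q$; then checking~\eqref{proposition_db_assump}, namely $k d^m\logc b - 1 \geq \tfrac13 m^2(\log\|\vu\|_\infty)^2$, amounts to $\logc q \gtrsim (\log\logc q)^2$, which holds for $q$ large. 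Verifying these inequalities simultaneously, and tracking how the final constant $\gamma$ depends on the constant $C$ of~\eqref{def_C} and on the bounded ratio $q/q_{k,m}$, is the technical heart of the argument; everything else is the routine triangle-inequality dichotomy.
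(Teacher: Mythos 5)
Your overall strategy is the same as the paper's: compare $p/q$ with a member $p_{k,m}/q_{k,m}$ of the family built from the functional equation, use \eqref{proposition_db_u_1} when the two rationals coincide, and use the triangle inequality otherwise. However, your partner-selection rule --- take $q_{k,m}$ to be ``the largest element of the family not exceeding a suitable constant multiple of $q$,'' so that $q$ and $q_{k,m}$ have a \emph{bounded} ratio --- breaks the triangle-inequality step. With $q_{k,m}\leq Kq$ the quantity
\[
\frac{1}{q\,q_{k,m}}-\frac{3\cdot 2^{C\sqrt{\logc q_{k,m}\logc\logc q_{k,m}}}}{q_{k,m}^2}
\]
is negative for all large $q$: the subtracted term dominates unless $q_{k,m}\geq 6q\cdot 2^{C\sqrt{\logc q_{k,m}\logc\logc q_{k,m}}}$. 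The partner must therefore be chosen \emph{larger} than $q$ by precisely the subexponential factor; the paper arranges this by solving the implicit equation \eqref{thm_im_def_x} for $x$ and choosing $n$ with $b^{n}\geq x\geq 12q\cdot 2^{\frac32 C\sqrt{\logc x\logc\logc x}}$, so that $q_{k,m}\geq\frac12 x$. It is this forced inflation of $q_{k,m}$ relative to $q$ --- not a bounded ratio --- that produces the loss $\exp\left(\gamma\sqrt{\logc q\logc\logc q}\right)$ in \eqref{thm_im_result}.

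A second, related imprecision: the multiplicative gaps between consecutive admissible $q_{k,m}$ are not ``controlled'' in the sense of being bounded. Condition \eqref{lem5_assump}, and the need to keep the numerator of \eqref{proposition_db_l_1} (essentially $k^{k}\|\vu\|_\infty^{O(k\log k)}$) subexponential in $q_{k,m}$, force $d^m\asymp\sqrt{n\logc n}$ and $k\asymp\sqrt{n/\logc n}$ with $n=kd^m\asymp\logc q$; for such $m$ consecutive admissible exponents $n$ differ by $d^m$, so consecutive values of $q_{k,m}\asymp|a_{k,k}|b^{n}$ differ by a factor $b^{d^m}=2^{\Theta(\sqrt{\logc q\logc\logc q})}$. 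This sparseness is the second contribution to the final constant (the terms $2d\tau\logc b+2\tau$ in the paper's $\gamma$), and your claim that ``for every large $q$ there is a partner in the family of roughly the same size'' holds only with ``roughly'' meaning up to this subexponential factor. Once these two quantitative points are corrected, the remainder of your outline (verification of \eqref{lem5_assump} and \eqref{proposition_db_assump}, the two-case dichotomy) matches the paper's argument.
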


It is easy to see that Theorem~\ref{th_main} is a straight corollary of
Theorem~\ref{thm_im}. Indeed, if $f(b)$ from Theorem~\ref{th_main} is not
zero then so is $g_\vu(b)$ and the lower bound~\eqref{thm_im_result} is
satisfied for all large
enough $q$, therefore 
the inequality
$$
\left|g_\vu(b)-\frac{p}{q}\right| < q^{-2}\exp(-
\gamma
\sqrt{\log q\log \log q})
$$
has only finitely many solutions. By definition, this implies that
$g_{\vu}(b)$ and in turn $f(b)$ are both $q^{-2}\exp(-
\gamma
\sqrt{\log q\log \log q})$-badly approximable.

\begin{proof}[Proof of Theorem~\ref{thm_im}]
In this proof, we will use the constant $C$ defined by~\eqref{def_C}. Fix a
couple of integers $p$ and $q$. We start with some preliminary calculations
and estimates.

Define $x>2$ 
to be the unique solution of the following equation
\begin{equation} \label{thm_im_def_x}
q=\frac{1}{12}\cdot x\cdot 2^{-\frac32C\sqrt{\logc x\logc\logc x}},
\end{equation}
where the constant $C$ is defined by~\eqref{def_C}.

The condition $x>2$ ensures that both $\logc x$ and the double logarithm $\logc\logc x$
exist and are positive, hence $2^{-C\sqrt{\logc x\logc\logc x}}<1$ and
thus

\begin{equation} \label{q_leq_x}
12 q < x.
\end{equation}
For large enough $q$ we then have
$$
\frac{81}{4}C^2 \logc \logc x < \logc x
$$
and therefore
\begin{equation} \label{ie_loglog_x_leq_log_x}
2^{\frac32 C\sqrt{\logc x\logc \logc x}}<x^{1/3}.
\end{equation}

From~\eqref{thm_im_def_x} and~\eqref{ie_loglog_x_leq_log_x} we readily infer
\begin{equation} \label{thm_im_preliminary_ub_x}
x<\left(
12 q\right)^{3/2},
\end{equation}
Rewrite~\eqref{thm_im_def_x} in the following form
\begin{equation} \label{thm_im_def_x_2}
x=
12 q\cdot 2^{\frac32C\sqrt{\logc x\logc\logc x}}.
\end{equation}
Then, by applying~\eqref{thm_im_preliminary_ub_x} to it we find that, 
for large enough $q$,
\begin{equation} \label{thm_im_ub_x}
x<
12q\cdot 2^{2C\sqrt{\logc q\logc\logc q}}.
\end{equation}

Denote
\begin{equation} \label{def_t}
t:=\log_b x.
\end{equation}
Fix an arbitrary value $\tau\ge \tau_0>1$, where $\tau_0 = \tau_0(\vu)$ is a
parameter which only depends on $\vu$ and which we will fix later (namely, it
has to ensure inequality~\eqref{cond_tau}). Assume that $t>2$ is large
enough (that is, assume $q$ is large enough, then by~\eqref{q_leq_x} $x$ is
large enough, hence by~\eqref{def_t} $t$ is large enough), so that
\begin{equation} \label{t_log_t}
d \leq
\frac1\tau\sqrt{\frac{t}{\logc t}}.
\end{equation}
As $t>2$, we also have $t\logc t > 2$. Choose an integer $n$ of the form
$n:=k\cdot d^m$ with $m\in\N$, $k\in\Z$ such that
\begin{eqnarray}
t\leq & n & \leq t+d\tau\sqrt{t\logc t}, \label{thm_im_distance_t_n_ub}\\
\tau \sqrt{t\logc t} \leq & d^m & \leq d \tau \sqrt{t\logc t}. \label{thm_im_2m_lb_1}
\end{eqnarray}
One can easily check that such $n$ always exists.

Inequalities \eqref{t_log_t}, \eqref{thm_im_distance_t_n_ub}
and~\eqref{thm_im_2m_lb_1} imply
\begin{equation} \label{thm_im_k_ub_1}
k=\frac{n}{d^m}\leq\frac{t+d\tau \sqrt{t\cdot\logc
t}}{\tau\sqrt{t\cdot\logc t}}=\frac1\tau\sqrt{\frac{t}{\logc t}}+d\le
\frac2\tau\sqrt{\frac{t}{\logc t}}.
\end{equation}
Then we deduce, for $t$ large enough,
$$
k\logc k\le \frac2\tau\sqrt\frac{t}{\logc t} \left(\logc
(2/\tau)+\frac12\logc t-\frac12\logc\logc t\right)< \frac2\tau
\sqrt{t\logc t}.
$$
Therefore, for any $\tau$ large enough, that is for any $\tau\geq\tau_0$, where $\tau_0$ depends only on $\vu$, we have
\begin{multline} \label{cond_tau}
2+\logc (k+1)+\frac{k}{2}\logc k 
\\+ (k+1)(\log_d(2k+1)+1)\logc
\|\vu\|_\infty)
<\tau \sqrt{t\logc t}.
\end{multline}
By taking the exponent base two of the left hand side of~\eqref{cond_tau} and
the exponent base $b\geq 2$ of the right hand side of~\eqref{cond_tau}, and
then using~\eqref{thm_im_2m_lb_1}, we ensure that~\eqref{lem5_assump} is
satisfied. We can also take $q$ (and, consecutively, $t$) large enough so
that $m$, bounded from below by~\eqref{thm_im_2m_lb_1}, satisfies $d^m \ge
m^2 (\logc\|\vu\|_\infty)^2$, and then
necessarily~\eqref{proposition_db_assump} is verified. Also,
\eqref{thm_im_distance_t_n_ub} and~\eqref{thm_im_2m_lb_1} imply that, for $t$
large enough, $k\geq 2$.

Hence we have checked all the conditions on $k$ and $m$ from
Proposition~\ref{proposition_db}. It implies that the integers $p_{k,m}$ and
$q_{k,m}$, defined by~\eqref{def_p_k_m} and~\eqref{def_q_k_m}, satisfy
inequalities~\eqref{proposition_db_u_1} and~\eqref{proposition_db_claim_2}.
Lemma~\ref{lemma_ub_c_i} and inequality~\eqref{lem5_assump} imply the
inequality~\eqref{lemma_qkm_db_bdm_assumption}, so we can use
Lemma~\ref{lemma_qkm_db}, i.e. we have
\begin{equation} \label{thm_im_q_k_m_db}
\frac12 |a_{k,k}|b^n\leq q_{k,m}\leq\frac32 |a_{k,k}|b^n.
\end{equation}
In case if
$$
\frac{p}{q}=\frac{p_{k,m}}{q_{k,m}},
$$
the result~\eqref{thm_im_result} readily follows from the lower
bound~\eqref{proposition_db_u_1} in Proposition~\ref{proposition_db}.

\hidden{
We readily infer
$$
\left|q-q_{k,m}\right|\leq
$$
}

We proceed with the case
\begin{equation} \label{thm_im_case_different}
\frac{p}{q}\ne\frac{p_{k,m}}{q_{k,m}}.
\end{equation}

By triangle inequality, and then by the upper
bound~\eqref{proposition_db_claim_2}, we have
\begin{equation} \label{thm_im_main_lb_1}
\begin{aligned}
\left|g_\vu(b)-\frac{p}{q}\right|&\geq\left|\frac{p_{k,m}}{q_{k,m}}-\frac{p}{q}\right|-\left|g_\vu(b)-\frac{p_{k,m}}{q_{k,m}}\right|\\
&\geq \frac{1}{q_{k,m} q}-\frac{3\cdot 2^{C{\sqrt{\logc q_{k,m}\logc\logc
q_{k,m}}}}}{q_{k,m}^2}.
\end{aligned}
\end{equation}

By 
applying the upper bound in~\eqref{thm_im_q_k_m_db} complimented with~\eqref{ineq_hq_2}, we find
$$
\logc q_{k,m} \leq \logc \frac32+ k/2\logc k+
k(\log_d(2k+1)+1)\logc\|\vu\|_\infty + n\logc b
$$
Upper bounds~\eqref{thm_im_k_ub_1} on $k$
and~\eqref{thm_im_distance_t_n_ub} on $n$ ensure that for large
enough $q$ we have
\begin{equation} \label{thm_im_ub_1}
2^{C\sqrt{\logc q_{k,m}\logc\logc q_{k,m}}}\leq 2^{\frac32C\sqrt{\logc
x\logc\logc x}}.
\end{equation}

The formula~\eqref{def_t} for $t$ and the lower bound
in~\eqref{thm_im_distance_t_n_ub} together give $b^n\geq x$. Then, by using
the lower bound~\eqref{thm_im_q_k_m_db},
we find
\begin{equation} \label{thm_im_lb_1}
q_{k,m}\geq \frac12 b^n\geq \frac12 x.
\end{equation}
By using the estimates~\eqref{thm_im_ub_1} and~\eqref{thm_im_lb_1} on the numerator and denominator respectively, and then by substituting the value of $x$ given by~\eqref{thm_im_def_x_2}, we find
$$
\frac{3\cdot 2^{C{\sqrt{\logc q_{k,m}\logc\logc q_{k,m}}}}}{q_{k,m}^2}\leq
\frac{3\cdot 2^{\frac32C{\sqrt{\logc x\logc\logc x}}}}{\frac12 x\cdot
q_{k,m}} \stackrel{\eqref{thm_im_def_x_2}}=\frac{1}{2 q_{k,m} q},
$$
hence, recalling~\eqref{thm_im_main_lb_1}, we find
\begin{equation} \label{thm_im_lb_12}
\left|g_\vu(b)-\frac{p}{q}\right|\geq\frac{1}{2 q_{k,m} q}.
\end{equation}


By inequality~\eqref{thm_im_q_k_m_db} combined with the upper bound in~\eqref{thm_im_distance_t_n_ub} and then~\eqref{def_t} and~\eqref{thm_im_ub_x} we get that, for $q$ large enough,
\begin{equation*} 
q_{k,m}\leq\frac32 |a_{k,k}|b^n\leq \frac32 |a_{k,k}|
b^{t+d\tau\sqrt{t\logc t}} \leq 18 |a_{k,k}|q\cdot
2^{(2d\tau\logc b+2C)\sqrt{\logc q \logc\logc q}}.
\end{equation*}
The bound~\eqref{ineq_hq_2} implies 
\begin{equation} \label{ineq_hq_2_log_corollary}
\logc |a_{k,k}|\leq \frac{k}{2}\logc k+k(\log_d (2k+1) + 1)\logc\|\vu\|_\infty.
\end{equation}
By comparing the right hand side of this inequality with the left hand side
in~\eqref{cond_tau} we find
$$
|a_{k,k}|\leq 2^{2\tau\sqrt{\logc q\logc\logc q}}
$$
and then
$$
q_{k,m}\le 18 q\cdot 2^{(2d\tau\logc b+2\tau+2C)\sqrt{\logc q\logc\logc q}}
$$
Finally, \eqref{thm_im_lb_12} implies
$$
\left|g_\vu(b)-\frac{p}{q}\right|\geq\frac{1}{36 q^2\cdot
2^{(2d\tau\logc b+2\tau+2C)\sqrt{\logc q \logc\logc q}}}.
$$
This completes the proof of the theorem with $\gamma = \ln
2\cdot\left(2d\tau\logc b+2\tau+2C\right)$.
\end{proof}



\begin{thebibliography}{0}

\bibitem{adamczewski_cassaigne_2006} B.~Adamczewski, J.~Cassaigne,
``Diophantine properties of real numbers generated by finite
automata'', Comp. Math., 142, 2006, 1351 -- 1372.

\bibitem{adamczewski_rivoal_2009} B.~Adamczewski, T.~Rivoal, ``Irrationality measures for some automatic real numbers'', Math. Proc. Cambridge Phil. Soc., 147 (2009), 659--678.

\bibitem{APWW1998} J.-P.~Allouche, J.~Peyri\`ere, Z.-X.~Wen and
Z.-Y.~Wen, ``Hankel determinants of the Thue-Morse sequence'', Ann.
Inst. Fourier (Grenoble), 48, 1998, 1-27.

\bibitem{AS2003} J.-P.~Allouche and J.~Shallit, ``Automatic Sequences: Theory, Applications, Generalizations''. Cambridge University Press, Cambridge, 2003.

\bibitem{badziahin_2017} D.~Badziahin, ``Finding special factors of values of polynomials at integer
points''. Preprint.

\bibitem{BZ2015} Badziahin, D. \and Zorin, E., ``Thue-Morse constant is not badly approximable''. International Mathematics Research Notices 2015(19): 9618-9637.

\bibitem{badziahin_zorin_2015} Badziahin, D. \and Zorin, E., ``On generalized Thue-Morse functions and their
values.''. Preprint, https://arxiv.org/abs/1509.00297.

\bibitem{becker_1994} P.~G.~Becker, ``k-regular power series and
Mahler-type functional equations''. J. Numb. Theor., 49, 1994, 269
-- 286.

\bibitem{bell_bugeaud_coons_2015} J.~P.~Bell, Y.~Bugeaud, M.~Coons,
    ``Diophantine approximation of Mahler numbers''. Proc. Lond. Math. Soc., 110(5), 2015, 1157 -- 1206.

\bibitem{bugeaud_2011} Y.~Bugeaud, ``On the rational approximation
to the Thue-Morse-Mahler numbers''. Ann. Inst. Fourier, 61, 2011,
2065 -- 2076.

\bibitem{bugeaud_han_wen_yao_2015} Y. Bugeaud, G. N. Han, Z. Y. Wen, J. Y
Yao, ``Hankel determinants, Pad\'{e} approximations and
irrationality exponents''. Int. Math. Res. Notes, 2016(5), 2016,
1467 -- 1496.

\bibitem{coons_2013} M.~Coons, ``On the rational approximation to
the sum of reciprocals of the Fermat numbers''. Ramanujan J., 30,
2013, 39 -- 65.

\bibitem{DNNS1996} D.~Duverney, Ke.~Nishioka, Ku.~Nishioka, I.~Shiokawa. ``Transcendence of Jacobi's theta
series'', Proc. Japan. Acad. Sci, Ser A, 72, (1996), 202-203.

\bibitem{guo_wu_wen_2014} Y.~J.~Guo, Z.~X.~Wen, W.~Wu, ``On the
irrationality exponent of the regular paperfolding numbers''. Lin.
Alg. Appl., 446, 2014, 237 -- 264.

\bibitem{HS1965} J.~Hartmanis and R.~E.~Stearns, ``On the Computational Complexity of Algorithms'', Transactions of the American Mathematical Society, Vol. 117 (May, 1965), pp. 285-306.

\bibitem{Liouville1851} J.~Liouville, ``Sur des classes tr\`es-\'etendues de quantit\'es dont la valeur n’est ni
alg\'ebrique, ni m\^eme r\'eductible \`a des irrationnelles alg\'ebriques'', J. de Math. Pures et Appl.,1851.

\bibitem{Mah1930} K.~Mahler, ``Arithmetische Eigenschaften einer Klasse transzendentaltranszendenter
Funktionen'', Math. Z. 32 (1930), 545-585.

\bibitem{Ni1996} K.~Nishioka, ``Mahler Functions and Transcendence'', Lecture Notes in Math. 1631, Springer,
1996.

\bibitem{vdP_S} A.~J.~van~der~Poorten, J.~O.~Shallit, ``Folded Continued Fractions'', J. Number Theory, 40, 1992, no.~2, 237--250.

\bibitem{poorten_1998} A. J. van der Poorten., ``Formal power series and their continued fraction
expansion''. Algorithmic Number Theory. Lecture notes in Computer
Science, 1423, pp 358 -- 371, Springer, Berlin, 1998.

\bibitem{wu_wen_2014} Z.~X.~Wen, W.~Wu, ``Hankel determinant of the
Cantor sequence'' (in Chinese), Scienta Sinica Math., 44, 2014, 1059
-- 1072.


\end{thebibliography}
\end{document}